\crefname{hypothesis}{Hypothesis}{Hypotheses}
\crefname{fact}{Fact}{Facts}
\title{Error estimates of an exponential wave integrator for the nonlinear Schr\"odinger equation with singular potential\thanks{Submitted to the editors DATE.
\funding{This work was partially supported by the Ministry of Education of Singapore under its
	AcRF Tier 1 funding grant A-8003584-00-00 (W. Bao) and Tier 2 funding grant T2EP20222-0001 (A-8001562-00-00) (C. Wang).}}}
\author{Weizhu Bao\thanks{Department of Mathematics, National University of Singapore, Singapore 119076
		(\email{matbaowz@nus.edu.sg}, \email{e0546091@u.nus.edu})} 
	\and Chushan Wang\footnotemark[2]}
\newcommand{\R}{\mathbb{R}}
\newcommand{\C}{\mathbb{C}}
\newcommand{\Z}{\mathbb{Z}}
\newcommand{\Stau}{S_\tau}
\newcommand{\vxi}{\bm{\xi}}
\newcommand{\vx}{\bm{x}}
\newcommand{\rmd}{\mathrm{d}}
\newcommand{\vep}{\varepsilon}
\newcommand{\vphi}{\varphi}
\DeclareMathOperator*{\esssup}{ess\,sup}
\begin{document}
	
	\maketitle
	
	% REQUIRED
	\begin{abstract}
We analyze a first-order exponential wave integrator (EWI) for the nonlinear Schr\"odinger equation (NLSE) with a singular potential that is locally in $L^2$, which might be locally unbounded. A typical example is the inverse power potential such as the Coulomb potential, which is the most fundamental potential in quantum physics and chemistry. We prove that, under the assumption of $L^2$-potential and $H^2$-initial data, the $L^2$-norm convergence of the EWI is, roughly, first-order in one dimension (1D) and two dimensions (2D), and $\frac{3}{4}$-order in three dimensions (3D). In addition, under a stronger integrability assumption of $L^p$-potential for some $p>2$ in 3D, the $L^2$-norm convergence increases to almost ${\frac{3}{4}} + 3(\frac{1}{2} - \frac{1}{p})$ order if $p \leq \frac{12}{5}$ and becomes first-order if $p > \frac{12}{5}$. In particular, our results show, to the best of our knowledge for the first time, that first-order $L^2$-norm convergence can be achieved when solving the NLSE with the Coulomb potential in 3D. The key advancements are the use of discrete (in time) Strichartz estimates, which allow us to handle the loss of integrability due to the singular potential that does not belong to $L^\infty$, and the more favorable local truncation error of the EWI, which requires no (spatial) smoothness of the potential. Extensive numerical results in 1D, 2D, and 3D are reported to confirm our error estimates and to show the sharpness of our assumptions on the regularity of the singular potentials.  
	\end{abstract}
	
	% REQUIRED
	\begin{keywords}
		nonlinear Schr\"odinger equation, exponential wave integrator, singular potential, discrete Strichartz estimates, error estimate
	\end{keywords}
	
	% REQUIRED
	\begin{MSCcodes}
		65M15, 35Q55, 81Q05
	\end{MSCcodes}
	
	\section{Introduction}
	We consider the following Cauchy problem of the nonlinear Schr\"odinger equation (NLSE) of the form
	\begin{equation}\label{NLSE}
		\left\{
		\begin{aligned}
			&i \partial_t \psi(\vx, t) = -\Delta \psi(\vx, t) + V(\vx) \psi(\vx, t) + \beta|\psi(\vx, t)|^{2\sigma} \psi(\vx, t), \quad \vx \in \R^d, \\
			&\psi(\vx, 0) = \psi_0(\vx), \quad \vx \in \R^d, 
		\end{aligned}
		\right.
	\end{equation}
	where $t\geq0$ is time, $\vx = (x_1, \cdots, x_d)^T$ is the space variable, $d \in \{1, 2, 3\}$ represents the spatial dimension, $\psi = \psi(\vx, t) \in \C$ is the wave function, and $\beta \in \R$, $\sigma \in \R^+$ are given parameters characterizing the nonlinearity. Here, $V := V(\vx)$ is a real-valued external potential and we assume that \cite{cazenave2003}
	\begin{equation}
		V \in L^2(\R^d) + L^\infty(\R^d) := \{ a u + b v\, |\, u \in L^2(\R^d), \ v \in L^\infty(\R^d), \  a, b \in \R \}, 
	\end{equation}
	to allow for possible singularities.

	The NLSE \cref{NLSE} has diverse applications in quantum mechanics, quantum physics and chemistry, laser beam propagation, plasma and particle physics, deep water waves, etc \cite{book_NLSE_Sulem,review_2013,book_NLSE_Gadi}. Especially, when $\sigma=1$, it is widely used in the modeling and simulation of Bose-Einstein condensates (BEC), where it is also known as the Gross-Pitaevskii equation (GPE) \cite{BEC_book,ESY}. In these applications, the potential $V$ may take different forms due to the different theoretical and experimental settings. In particular, in many cases, the potential $V$ is of low regularity and could be singular \cite{singular_poten_PR,singular_poten_RevModPhys,singular_poten_1964}. The most important and fundamental example is the Coulomb potential with $V(\vx) = -1/|\vx|$, which is the quantum mechanical description of the Coulomb force between charged particles \cite{SE}. Other singular potentials include the inverse square potential with $V(\vx) = -1/|\vx|^2$, the Yukawa potential (or screened Coulomb potential) with $V(\vx) = - e^{-|\vx|}/|\vx|$, the Dirac delta potential, etc \cite{inverse_square,Yukawa,delta}. Another important class of low-regularity or singular potential arises in the study of wave propagation in disordered media and the phenomenon of localization~\cite{Anderson,disorder_science,mbl}, where the disorder potential may belong to spaces such as $L^\infty$, $L^2$, or even the negative Sobolev spaces $H^{-\alpha}$ for some $\alpha > 0$, depending on the strength and nature of the disorder.

	For the (local) well-posedness of the NLSE \cref{NLSE} with the singular potential of $L^p$-type, this has been well studied in the PDE literature, and we briefly summarize the relevant results below (see, e.g., the book \cite{cazenave2003}). When $V \in L^2(\R^d) + L^\infty(\R^d)$ and $\sigma > 0$ in \cref{NLSE}, the $H^2$ well-posedness is obtained, which is also recalled later. In fact, such $H^2$ well-posedness under $L^2$-potential should be sharp, according to the ill-posedness results in \cite{zhao2024}. For $H^1$-solution, the local well-posedness is obtained for $V \in L^p(\R^d) + L^\infty(\R^d)$ with $p \geq 1$ and $p>d/2$, and $0<\sigma<2/(d-2)$ ($0<\sigma<\infty$ if $d=1, 2$). Moreover, for $V \in L^p(\R^d) + L^\infty(\R^d)$ with $p \geq 1$ and $p>d/2$, and $0 < \sigma < 2/d$, the NLSE \cref{NLSE} is well-posed in $L^2$. 
	%	In addition, there are many works dealing with a particular type of singular potentials and concerns about the scattering theory, such as the Coulomb potential \cite{NLSE-Coulomb-H1}, the inverse square potential, or general inverse power potential, etc.  
	%	In the particular case of attractive Coulomb potential with $V(\vx) = -\frac{1}{|\vx|}$ in 3D, the global existence of $H^1$-solution is proved in \cite{NLSE-Coulomb-H1}.
	%according to the ill-posedness of $H^\alpha\ (\alpha>2)$-solution for the one dimensional cubic NLSE with $L^2$-potential on the torus \cite{zhao2024}. 
	
	Along the numerical side, many accurate and efficient numerical methods have been proposed in the literature to solve the NLSE, including the finite difference time domain (FDTD) method \cite{FD,bao2013,Ant,henning2017}, the time-splitting method \cite{BBD,bao2003,lubich2008,splitting_low_reg,su2022,bao2023_semi_smooth,splittinglowregfull}, the exponential wave integrator (EWI) \cite{ExpInt,SymEWI,bao2023_EWI}, and the low regularity integrator \cite{LRI,tree1,LRI2024,LRI,tree2}. However, most of these methods are proposed and analyzed under the assumption of sufficiently smooth potential (usually at least $H^1 \cap L^\infty$ or $H^2$). Very recently, efforts have been made to establish error estimates under low-regularity assumptions on the potential. Nevertheless, the potential is still required to be (locally) bounded. For results concerning error estimates under low regularity $L^\infty$-potential, we refer to \cite{henning2017} for the FDTD methods, \cite{bao2023_EWI,bao2023_sEWI} for the EWI, and \cite{bao2023_improved,bao2024} for the time-splitting methods. Despite these advances, the assumption of $L^\infty$-potential excludes certain important singular potentials--most notably the Coulomb potential, which is the most important one in applications. 
	
	In fact, despite its wide range of applications, very few results are available concerning the error analysis of numerical methods for the NLSE with the singular potential of $L^p$-type. Recently, the splitting method has been analyzed for the linear Schrödinger equation with such singular potentials in \cite{PRR,singular2024}. The results in \cite{PRR} are valid only for very specific initial data—namely, eigenstates of the corresponding linear Schrödinger operator—while the estimates in \cite{singular2024} apply to general initial data with certain regularity. However, the convergence rate obtained in both works is rather low; for example, only $\frac{1}{4}$-order convergence in $L^2$-norm is achieved for Coulomb potential in 3D. In fact, such a low convergence rate is not due to the proof techniques but confirmed by the numerical experiments. Moreover, their results are limited to the linear case and cannot be applied when there is nonlinearity in \cref{NLSE}. In another recent development \cite{zhao2024}, a novel low regularity integrator is proposed and the error estimate is established for the 1D cubic NLSE with singular potentials. The error estimate in \cite{zhao2024} is valid even for certain distributional potential; however, the convergence rate for $L^2$-potential is proved to be $\frac{3}{4}$-order in $L^2$-norm and the results are restricted to 1D. We would also like to mention a completely different approach proposed in \cite{shao2023} by turning to the Wigner function dynamics such that the singular potential is converted to the Wigner kernel with weak or even no singularity. In light of the above, the rigorous numerical analysis for the NLSE with the singular potential of $L^p$-type remains largely open. In particular, to the best of our knowledge, there seem to be no results concerning the standard EWI under singular potentials, despite its popularity in solving the smooth NLSE. On the other hand, the analysis in \cite{bao2023_EWI,bao2023_sEWI}, which shows that the EWI performs favorably under low regularity $L^\infty$-potential, suggests that it holds significant promise to handle more singular potentials. 
	
	In this work, we analyze a standard first-order EWI, also known as the exponential Euler scheme in the literature, for the NLSE \cref{NLSE} with the singular potential in $L^2 + L^\infty$. Our main results are as follows: 
	\begin{enumerate}[(i)]
		\item Under the weakest assumption of $L^2+L^\infty$-potential, and $H^2$-initial data, the $L^2$-norm convergence of EWI is, roughly, first-order in 1D and 2D, and $\frac{3}{4}$-order in 3D (\cref{thm:L2poten}). 
		
		\item When assuming stronger integrability of $L^p+L^\infty$-potential with $p>2$ in 3D, we obtain higher convergence order of $\frac{3}{4} + 3(\frac{1}{2} - \frac{1}{p})$ when $p \leq \frac{12}{5}$ and first-order convergence when $p>\frac{12}{5}$ (\cref{thm:optimal}). 
	\end{enumerate}
	As an immediate corollary, we obtain error estimates for the singular potential with inverse power type singularities in \cref{cor:inverse_power}, indicating that the EWI is optimally first-order convergent in $L^2$-norm for the NLSE with Coulomb potential in 3D (\cref{cor:coulomb}). To the best of our knowledge, this is the first time that first-order $L^2$-norm convergence is proved for the NLSE with 3D Coulomb potential. Moreover, the numerical results presented in \cref{sec:4} indicate that our assumption of $L^2+L^\infty$-potential for first-order $L^2$-norm convergence of the EWI (at least in 1D) is optimally weak. 
	%	In addition, our results naturally allow low regularity nonlinearity with non-integer $\sigma$ in the NLSE \cref{NLSE}, which arises from the Lee-Huang-Yang correction used in the modeling of quantum droplets and Bose-Fermi mixtures, as well as from non-Kerr media in nonlinear optics. 
	
	{
	In the following, we briefly explain the idea and novelty of our proof. There are two main difficulties in establishing error estimates under singular potentials of $L^p$-type. Firstly, the singular potential introduces loss of integrability, and thus destroys the usual $L^2$-stability of the numerical scheme. To be precise, for $V \in L^p$ with $2 \leq p < \infty$, the operator $\phi \rightarrow V \phi$ is bounded from $L^{\frac{2p}{p-2}}$ to $L^2$ instead of from $L^2$ to $L^2$. Here, $\frac{2p}{p-2} > 2$ for $2 \leq p < \infty$ shows the loss of integrability, making the EWI that we consider (indeed, most of exponential-type integrators) no longer stable in $L^2$. Although such loss of integrability and the resulting loss of $L^2$-stability introduces significant difficulties from the numerical perspective, it has been successfully overcome at the continuous level by the Strichartz estimates, which features a gain of the spatial integrability at the cost of some temporal integrability. Motivated by this, we apply the discrete Strichartz estimates (i.e. \cref{lem:dS1,lem:dS2,lem:dSc}), and establish the stability of the numerical scheme in some space-time norm $l^q([0, T]; L^r)$ (defined in \cref{eq:norm_def}) with $T>0$ being the final time in \cref{prop:Z}. Note that such stability is held globally by the whole sequence of numerical solutions, which is different from the more standard step-wise stability. We remark here that the discrete Strichartz estimates, first proposed in \cite{ignat2011}, have recently attracted more attention in obtaining error estimates of the NLSE under low regularity initial data \cite{LRI_error_1,choi2021,choi2025,su2022}, where they are used to control the nonlinearity. However, to our best knowledge, this is the first time that these estimates are used to establish error estimates under singular potentials. 
	
	The second difficulty is that the singular potential leads to a significant loss of the (local) convergence order. This phenomenon is typical for time-splitting methods \cite{singular2024,PRR,bao2023_semi_smooth}: based on the standard commutator estimates \cite{jahnke2000,lubich2008}, each convergence order requires to apply two additional spatial derivatives to the potential (equivalently, bound terms of the form $\Delta(V \psi)$). This explains the extremely low convergence orders proved and observed in \cite{singular2024,PRR}. On the other hand, our recent results in \cite{bao2023_EWI,bao2023_sEWI} indicate that the EWI may partially overcome such convergence order reduction as each convergence order requires controlling the first-order time derivative of the potential term (i.e. $\partial_t (V \psi) = V \partial_t \psi$). When $V \in L^\infty$, this is sufficient to recover the optimal local temporal convergence order in $L^2$-norm, since $\| V \partial_t \psi \|_{L^2} \leq \| V \|_{L^\infty} \| \partial_t \psi \|_{L^2}$ \cite{bao2023_EWI,bao2023_sEWI}. However, for singular $V$, even though the time derivative shifts the regularity requirement from $V$ to $\psi$, the singularity of $V$ still destroys the $L^2$-integrability of $\partial_t \psi$, resulting in the loss of local convergence orders. To address this issue, we again invoke discrete Strichartz estimates in a global-in-time manner across the time steps, and combine them with the space-time estimate of $\partial_t \psi$ from the known regularity theory \cite{cazenave2003}. Somewhat surprisingly, this yields an optimal first-order temporal convergence in $L^2$-norm even for merely $L^2$-potentials (see \cref{prop:E2}). As mentioned above, such optimal temporal convergence relies crucially on the fact that $\partial_t \psi$ admits the same space time estimates as $\psi$ itself. To our knowledge, an error analysis with the optimal temporal convergence depends on the space-time estimate of the time derivative of the exact solution has not been reported in the literature.}

	After addressing the two main difficulties, we can establish the error estimates. As a result of the sufficiently high-order convergence we obtained for the EWI and the $H^2$-regularity of the exact solution, the nonlinearity can be easily controlled by the combination of inverse estimates and the mathematical induction, which are standard. We remark here that due to the use of discrete Strichartz estimates, we have to work under the entire space $\R^d$ and thus the numerical scheme is essentially a semidiscretization. It will be our future work to extend the results to the full discretization on some bounded domain with, for example, periodic boundary conditions \cite{LRI_error_2,splittinglowregfull,splitting_low_reg}. 

	{The techniques developed in this paper for the error estimates in the presence of singular potentials are general and applicable to a broad class of numerical methods and dispersive equations. In particular, they extend directly to (filtered) time-splitting methods based on the analysis in \cite{bao2023_improved}, yielding error estimates with improved convergence orders compared to the non-filtered schemes considered in \cite{PRR,singular2024}. Moreover, the approach can be pushed to more singular potentials than $L^2$-potentials as long as the regularity of the exact solution and the space-time estimates of its time derivative are available.}
	
	The rest of the paper is organized as follows. In \cref{sec:2}, we present the EWI and state our main results. In \cref{sec:3}, we introduce some technical estimates including the discrete Strichartz estimates. \Cref{sec:4} is devoted to the error estimates of the EWI under singular potentials. Numerical results are reported in \cref{sec:5}. Finally, some conclusions are drawn in \cref{sec:6}. Throughout the paper, we adopt standard notations of Sobolev spaces on $\R^d$ with corresponding norms. For $a \in \R$, $a^-$ denotes $a-\vep$ for any small $\vep>0$. For any $p \in [1, \infty]$, $p' \in [1, \infty]$ is the H\"older conjugate of $p$ such that $\frac{1}{p}+\frac{1}{p'} =1$. We denote by $C$ a generic positive constant independent of the time step size $\tau$, and by $C(\alpha)$ a generic positive constant depending on the parameter $\alpha$. The notation $A \lesssim B$ is used to represent that there exists a generic constant $C>0$, such that $|A| \leq CB$. 
	
	\section{The exponential wave integrator and main results}\label{sec:2}
	In this section, we present the exponential wave integrator and the main error estimate results. 
	\subsection{The exponential wave integrator (EWI)}
	Choose $\tau > 0$ to be the time step size, and denote the time steps as $t_n = n \tau$ for $n = 0, 1, \cdots$. Similar to \cite{ignat2011,LRI_error_1,choi2021,choi2025}, introduce the following filter function $\Pi_\tau$ as 
	\begin{equation}\label{eq:Pitau_def}
		(\Pi_\tau \phi)(\vx) = \int_{\R^d} \chi(\tau^{\frac{1}{2}} {\vxi}) \widehat{\phi}(\vxi) e^{i \vx \cdot \vxi} \rmd \vxi, \quad \vx \in \R^d, 
	\end{equation}
	where $\chi \in C^\infty(\R^d)$ satisfies $\chi(\vx) \equiv 1$ for $\vx \in B({\bf 0}, 1):=\{\vx\in\R^d\,|\, |\vx|\le1\}$ and is supported on $B({\bf 0}, 2) $. 
	%	Consider a first-order exponential wave integrator (EWI)
	%	\begin{equation}
		%		\psi^{n+1} = e^{i \tau \Delta} \psi^n - i \int_0^\tau e^{i(\tau - s)\Delta} (V \psi^n) \rmd s = e^{i \tau \Delta} \psi^n - i \tau \vphi_1(i \tau \Delta) (V \psi^n). 
		%	\end{equation}
	
	Let $\psi_\tau^n$ be the approximation to $\psi(\cdot, t_n)$ for $n \geq 0$. Then the exponential wave integrator (EWI) reads \cite{ExpInt,bao2023_EWI}
	\begin{equation}\label{eq:EWI_scheme}
		\begin{aligned}
			&\psi_\tau^{n+1} = e^{i \tau \Delta} \psi^n_\tau - i \tau \vphi_1(i \tau \Delta) \Pi_\tau (V \psi_\tau^n + \beta |\psi^n_\tau|^{2\sigma} \psi_\tau^n), \quad n \geq 0, \\
			&\psi^0_\tau = \Pi_\tau \psi_0, 
		\end{aligned} 
	\end{equation}
	where $\vphi_1(z) = \frac{e^z - 1}{z}$ for $z \in \C$, and $\vphi_1(i \tau \Delta)$ is the Fourier multiplier with symbol $\vphi_1(- i \tau |\vxi|^2)$. Compared to the standard first-order EWI in \cite{ExpInt,bao2023_EWI}, we have added a filter function $\Pi_\tau$ in the front of the potential and nonlinearity in \cref{eq:EWI_scheme}. This will allow us to use the discrete Strichartz estimates to be presented in the next section. {In fact, since the time is discrete, the filter function is necessary for the discrete Strichartz estimates to hold. We refer to \cite{ignat2011,choi2021,LRI_error_1} for more details. Similar filter functions are also introduced in \cite{LRI_error_2,splitting_low_reg} to handle low regularity initial data using the discrete Bourgain spaces.} 
	%	\begin{equation}
		%		(\vphi_1(i \tau \Delta) \phi)(\vx) = \int_{\R^d} \vphi_1(- i \tau |\vxi|^2) \widehat{\phi}(\vxi) e^{i \vx \cdot \vxi} \rmd \vxi, \quad \vx \in \R^d. 
		%	\end{equation}
	
	\subsection{Main results}
	In the following, we present our main error estimate results for the EWI \cref{eq:EWI_scheme}. 
	
	We first recall the $H^2$ well-posedness of the NLSE \cref{NLSE} \cite[Theorem 4.8.1]{cazenave2003}: Under the assumptions that $V \in L^2(\R^d) + L^\infty(\R^d)$ and $\sigma > 0$, if the initial data $\psi_0 \in H^2(\R^d)$, there exists a maximal existing time $0<T_\text{max} \leq \infty$ such that 
	\begin{equation}\label{eq:H2-solution}
		\psi \in C([0, T_\text{max}); H^2(\R^d)) \cap C^1([0, T_\text{max}); L^2(\R^d)). 
	\end{equation}
	{The above $H^2$ well-posedness is also sharp in the sense that the NLSE \cref{NLSE} is ill-posed in any $H^\gamma$-spaces with $\gamma > 2$ when $V \in L^p(\R^d) + L^\infty(\R^d)$ with $2 \leq p \leq \infty$ \cite{wu2025_singular1d,wu2025_singularhd}. Hence, even if the initial data has better regularity than $H^2$, the solution cannot have better-than-$H^2$ regularity in general. }  
	
	Let $0<T<T_{\text{max}}$ be a fixed final time. We say $p \in \R$ satisfies \cref{eq:pd_def}, if
	\begin{equation}\label{eq:pd_def}
		p=\left\{
		\begin{aligned}
			&\geq 2, &&d=1, \\ 
			&> 2, && d = 2, \\
			&> \frac{12}{5}, &&d=3. 			
		\end{aligned}
		\right.  \tag{P}
	\end{equation}
	%	
	%	\begin{equation}\label{eq:past_def}
		%		p^\ast:= \left\{
		%		\begin{aligned}
			%			&\frac{12}{5}, &&d=3, \\
			%			&2, && d = 2, \\
			%			&2^{-}, &&d=1.  
			%		\end{aligned}
		%		\right.  
		%	\end{equation}
	Then we have the following two error estimates, one concerning the weakest regularity requirement on $V$ for the optimal convergence order, and the other concerning the highest convergence order that can be achieved under the assumption of $L^p+L^\infty$-potential for all $2 \leq p < \infty$. 
	
	\begin{theorem}[Optimal convergence]\label{thm:optimal}
		Assume that $V \in L^{p}(\R^d) + L^\infty(\R^d)$ with $p$ satisfying \cref{eq:pd_def}, and $\psi_0 \in H^2(\R^d)$. There exists $\tau_0>0$ sufficiently small such that when $0<\tau\leq\tau_0$, we have
		\begin{equation*}
			\| \psi(t_n) - \psi^n_\tau \|_{L^2} \lesssim \tau, \qquad 0 \leq n \leq T/\tau.   
		\end{equation*}
	\end{theorem}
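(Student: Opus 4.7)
My plan is to run a bootstrap induction on $n$ built around discrete Duhamel and the discrete-in-time Strichartz estimates \cref{lem:dS1,lem:dS2,lem:dSc}. Setting $e^n := \psi(t_n) - \psi_\tau^n$ and $f(t) := V\psi(t)+\beta|\psi(t)|^{2\sigma}\psi(t)$, the exact solution obeys $\psi(t_{n+1}) = e^{i\tau\Delta}\psi(t_n) - i\int_0^\tau e^{i(\tau-s)\Delta} f(t_n+s)\,ds$. Subtracting \cref{eq:EWI_scheme} gives the recursion $e^{n+1} = e^{i\tau\Delta} e^n + \zeta^n + \eta^n$, where $\zeta^n$ collects the perturbation linear in the error (namely $-i\tau \vphi_1(i\tau\Delta)\Pi_\tau[V e^n + \beta(|\psi(t_n)|^{2\sigma}\psi(t_n) - |\psi_\tau^n|^{2\sigma}\psi_\tau^n)]$) and $\eta^n$ is the local truncation error. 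Iterating yields $e^n = e^{in\tau\Delta} e^0 + \sum_{k=0}^{n-1} e^{i(n-1-k)\tau\Delta}(\zeta^k+\eta^k)$, which I would estimate \emph{simultaneously} in the target norm $\ell^\infty_\tau L^2_x$ and in an auxiliary discrete Strichartz norm $\ell^q_\tau L^r_x$ whose exponents are dictated by $p$.

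For the local truncation error I would split $\eta^n$ into a $\Pi_\tau[f(t_n+s)-f(t_n)]$ piece and an $(I-\Pi_\tau)f(t_n+s)$ piece. In the first, writing $f(t_n+s)-f(t_n) = \int_0^s \partial_r f(t_n+r)\,dr$ and applying Fubini produces a factor $(\tau-r)$ multiplying $\partial_t f$; from the $H^2$ well-posedness \cref{eq:H2-solution} one has $\partial_t \psi = i(\Delta\psi - V\psi - \beta|\psi|^{2\sigma}\psi) \in C([0,T];L^2)$, so the critical object is $V\partial_t\psi$. For $V\in L^p$ and $\partial_t\psi \in L^2$, H\"older only places $V\partial_t\psi$ in $L^{r'}$ with $1/r' = 1/p+1/2$, not in $L^2$---this is exactly the loss of integrability flagged in the introduction, and the reason to move to space-time mixed norms. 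The $(I-\Pi_\tau)$ piece is dispatched by the frequency-truncation bound $\|(I-\Pi_\tau)\phi\|_{L^2} \lesssim \tau \|\phi\|_{H^2}$ applied to $\psi$ after one further integration by parts in $s$ that trades a power of $s$ for a $\partial_t\psi$ factor.

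The loss of integrability is absorbed by feeding $V\partial_t\psi$ into the dual side of the discrete inhomogeneous Strichartz inequality with an appropriate pair $(q,r)$ determined by $1/r = 1/2-1/p$, which returns an $\ell^\infty_\tau L^2_x$ control of the Duhamel sum with an overall prefactor of order $\tau$. The role of the filter $\Pi_\tau$ in \cref{eq:EWI_scheme} is precisely to render the filtered discrete propagator $e^{i\tau\Delta}\Pi_\tau$ amenable to these estimates on the time grid; the hypothesis \cref{eq:pd_def}, in particular $p>12/5$ in 3D, is exactly the range in which the chosen $(q,r)$ falls inside the admissibility window furnished by \cref{lem:dS1,lem:dS2,lem:dSc} on $\R^d$. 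The same pair controls the linear-in-error contribution $\zeta^n$ via $\|\Pi_\tau(V e^k)\|_{L^2} \leq \|V\|_{L^p}\|\Pi_\tau e^k\|_{L^r}$, which is exactly the discrete Strichartz norm carried inside the induction.

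The nonlinear contribution is closed by mathematical induction combined with a Bernstein-type inverse estimate. Since $\psi_\tau^n$ is frequency-localized to $|\vxi| \lesssim \tau^{-1/2}$ for all $n$, we have $\|\psi_\tau^n-\Pi_\tau\psi(t_n)\|_{L^\infty} \lesssim \tau^{-d/4}\|\Pi_\tau e^n\|_{L^2} \lesssim \tau^{1-d/4}$ under the inductive hypothesis $\|e^k\|_{L^2} \leq C\tau$, so $\|\psi_\tau^n\|_{L^\infty} \leq C(\|\psi\|_{L^\infty([0,T];H^2)}+\tau^{1-d/4})$ remains uniformly bounded for $d\leq 3$ once $\tau\leq\tau_0$ is small. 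This renders $|\psi|^{2\sigma}\psi$ effectively Lipschitz on the relevant $L^\infty$-ball, and a discrete Gronwall argument applied jointly in the $\ell^\infty_\tau L^2_x$ and $\ell^q_\tau L^r_x$ norms closes the induction and yields the stated first-order bound. The main obstacle I foresee is the exponent book-keeping: one must identify the right discrete Strichartz pair $(q,r)$ and verify that the admissibility window accessible from \cref{lem:dS1,lem:dS2,lem:dSc} tracks condition \cref{eq:pd_def} sharply (especially at the threshold $p = 12/5$ in 3D, where the filter $\Pi_\tau$ is what actually makes the discrete estimate available). Once this calibration is in place, the remaining work is essentially a careful transcription of Cazenave's $H^2$-well-posedness argument onto the time grid.
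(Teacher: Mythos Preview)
Your overall architecture---discrete Strichartz estimates for the iterated Duhamel sum, an auxiliary $l^q_\tau L^r$ norm carried alongside $l^\infty_\tau L^2$, and an $L^\infty$ bootstrap via the Bernstein inverse estimate---matches the paper's proof. However, your calibration of the Strichartz pair is off in a way that prevents the argument from closing over the full range \cref{eq:pd_def}. You place $V\partial_t\psi$ in $L^{r'}$ via H\"older with $\partial_t\psi \in L^2$, giving $r = 2p/(p-2)$; but in $3$D this $r$ exceeds the admissible endpoint $r=6$ whenever $p<3$, so your pair is \emph{not} admissible for $\tfrac{12}{5} < p < 3$ and the dual Strichartz estimate you invoke is unavailable there. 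The paper instead uses the ``self-dual'' pair $(q_0, r_0) = (4p/d,\, 2p/(p-1))$ of \cref{lem:pair1}, for which the H\"older splitting reads $\|V_1\phi\|_{L^{r_0'}} \leq \|V_1\|_{L^{p}}\|\phi\|_{L^{r_0}}$ (\cref{eq:g1-1}); the key input you are missing is that this requires $\partial_t\psi \in L^{q_0}_t L^{r_0}_x$, not merely $L^\infty_t L^2_x$, and that stronger space--time regularity is part of Cazenave's $H^2$ well-posedness (it is folded into the constant $M$). With this pair, admissibility holds for all $p \geq 2$ in every dimension, and the restriction $p > \tfrac{12}{5}$ in $3$D enters instead through the frequency-truncation error (your $(I-\Pi_\tau)$ piece, the paper's $\mathcal{E}_1$; see \cref{prop:E1}).

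A second, related issue: your decomposition lands the projector on $f$, producing $(I-\Pi_\tau)f$ with $f = V\psi + \cdots \notin H^2$, so the bound $\|(I-\Pi_\tau)\phi\|_{L^2} \lesssim \tau\|\phi\|_{H^2}$ you cite is not directly applicable, and your ``integration by parts in $s$'' does not obviously repair this. The paper sidesteps the problem by comparing $\psi_\tau^n$ with $\Pi_\tau\psi(t_n)$ rather than $\psi(t_n)$ (see \cref{eq:error_eq}); the resulting truncation term is $g(\psi) - g(\Pi_\tau\psi)$, so that $(I-\Pi_\tau)$ hits the smooth $\psi$ rather than $V\psi$, and a second admissible pair $(\tilde q_0, \tilde r_0)$ (\cref{lem:pair2}) together with the bound \cref{eq:Deltapsiest} on $\|\Delta\psi\|_{L^{q_1}_t L^{p}_x}$ completes the estimate. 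Finally, a pointwise discrete Gronwall is awkward in the Strichartz framework; the paper instead subdivides $[0,T]$ into short windows on which the error-dependent term contracts, then iterates across windows.
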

	{According to the sharp $H^2$ well-posedness results recalled above, such first-order convergence in the $L^2$-norm is optimal in terms of the $H^2$-regularity of the exact solution. In addition, even with smoother initial data, the convergence order in the $L^2$-norm cannot exceed first-order for any standard exponential integrators in general due to the sharp $H^2$-regularity of the exact solution.} 
	%	\begin{theorem}[Optimal convergence]\label{thm:optimal}
		%		Assume that $V \in L^p(\R^d) + L^\infty(\R^d)$ with $p > p^\ast$, $\sigma > 0$, and $\psi_0 \in H^2(\R^d)$. There exists $\tau_0>0$ sufficiently small such that for $0<\tau\leq\tau_0$, 
		%		\begin{equation*}
			%			\| \psi(t_n) - \psi^n_\tau \|_{L^2} \lesssim \tau, \qquad 0 \leq n \leq T/\tau.  
			%		\end{equation*}
		%	\end{theorem}
	
	\begin{theorem}[Convergence for $L^p$-potential]\label{thm:L2poten}
		Assume that $V \in L^p(\R^d) + L^\infty(\R^d)$, and $\psi_0 \in H^2(\R^d)$. There exists $\tau_0>0$ sufficiently small such that when $0<\tau\leq\tau_0$, we have, for $0 \leq n \leq T/\tau$, 
		\begin{equation*}
			\| \psi(t_n) - \psi^n_\tau \|_{L^2} \lesssim \left\{
			\begin{aligned}
				&\tau, &&d=1, \ p = 2,  \ \beta \in \R, \\
				&\tau^{1^-}, &&d=2, \ p = 2,  \ \beta \in \R, \\
				&\tau^{\frac{3}{4}^-}, &&d=3, \ p = 2, \ \beta = 0, \\ 
				&\tau^{{\frac{3}{4}}^{-} + 3(\frac{1}{2} - \frac{1}{p})}, &&d=3, \ 2 < p \leq \frac{12}{5}, \ \beta \in \R. 
			\end{aligned}
			\right. 
		\end{equation*}
	\end{theorem}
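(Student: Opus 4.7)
The plan is to derive a discrete Duhamel representation for the error $e^n := \psi(t_n) - \psi^n_\tau$, bound the local truncation error in an appropriate dual Strichartz norm, and then close the $L^2$-estimate by applying the discrete Strichartz estimates \cref{lem:dS1,lem:dS2,lem:dSc} together with a short-time bootstrap. Starting from Duhamel's formula for \cref{NLSE} and the identity $\tau\varphi_1(i\tau\Delta) = \int_0^\tau e^{i(\tau-s)\Delta}\,ds$, subtracting \cref{eq:EWI_scheme} yields
\begin{equation*}
e^{n+1} = e^{i\tau\Delta}e^n - i\int_0^\tau e^{i(\tau-s)\Delta}\bigl\{\Pi_\tau\bigl[f(\psi(t_n)) - f(\psi^n_\tau)\bigr] + \mathcal{L}^n(s)\bigr\}\,ds,
\end{equation*}
where $f(\phi) = V\phi + \beta|\phi|^{2\sigma}\phi$ and $\mathcal{L}^n(s) = (I - \Pi_\tau)f(\psi(t_n)) + [f(\psi(t_n+s)) - f(\psi(t_n))]$ collects the Fourier-filter error and the time-discretization error. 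Iterating produces a discrete Duhamel sum for $e^n$ which is the starting point for all subsequent estimates.

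The next step is to bound $\mathcal{L}^k$ in a dual Strichartz norm $\ell^{q'}_\tau L^{r'}$. The favourable EWI structure only needs a single time derivative of $f(\psi)$; using the $H^2$ well-posedness \cref{eq:H2-solution} together with the Sobolev embedding $H^2(\R^d)\hookrightarrow L^\infty(\R^d)$ for $d\leq 3$, one controls $V\partial_t\psi$ by $\|V\|_{L^p+L^\infty}\|\partial_t\psi\|_{L^2}$ via H\"older, while a Bernstein-type bound at frequency $|\vxi|\lesssim \tau^{-1/2}$ handles the filter contribution. This produces $\|\mathcal{L}^k\|_{\ell^{q'}_\tau L^{r'}}\lesssim \tau^{\alpha}$ for a suitable admissible pair $(q,r)$ with exponent $\alpha$ equal to the target rate stated in the theorem.

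The heart of the argument is the Strichartz bootstrap. The admissible pair $(q,r)$ is selected so that the H\"older relation $\tfrac{1}{r'} = \tfrac{1}{p} + \tfrac{1}{r}$ and the admissibility condition $\tfrac{2}{q}+\tfrac{d}{r}=\tfrac{d}{2}$ (with $q>2$) are simultaneously satisfied, so that $\|Ve^k\|_{L^{r'}}\leq \|V\|_{L^p}\|e^k\|_{L^r}$ closes the loop. Applying the discrete Strichartz inequality to the discrete Duhamel sum on a window $[t_{n_0},t_{n_0+M}]$ gives, schematically,
\begin{equation*}
\|e\|_{\ell^\infty_\tau L^2} + \|e\|_{\ell^q_\tau L^r} \lesssim \|e^{n_0}\|_{L^2} + \tau^{\alpha} + (M\tau)^{1/q' - 1/q}\|V\|_{L^p}\|e\|_{\ell^q_\tau L^r} + (\text{nonlinear term}),
\end{equation*}
the factor $(M\tau)^{1/q'-1/q}$ coming from discrete H\"older in the time index. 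Since $q>2\geq q'$ gives $1/q'-1/q>0$ away from the endpoint, the linear potential term is absorbed on the left by choosing $M\tau$ small independently of $\tau$; patching finitely many such windows then delivers the claimed $L^2$-rate on $[0,T]$.

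For $\beta\neq 0$ the nonlinear term is handled by induction on $n$: the inverse estimate $\|\Pi_\tau\phi\|_{L^\infty}\lesssim \tau^{-d/4}\|\phi\|_{L^2}$ inherited from the frequency cutoff, combined with the inductive $L^2$-rate $\tau^{\alpha}$ on $e^k$, keeps $\|\psi^k_\tau\|_{L^\infty}$ uniformly bounded whenever $\alpha>d/4$, so that $\phi\mapsto |\phi|^{2\sigma}\phi$ is locally Lipschitz along the iterates. The main obstacle is exactly the tension between the H\"older loss $1/p$ and the Strichartz admissibility: for $p=2$ the constraint $\tfrac{1}{r'}=\tfrac{1}{2}+\tfrac{1}{r}$ pushes $r$ toward the admissibility boundary, producing $\alpha = 1$ in $d=1$, $\alpha = 1^-$ in $d=2$, and only $\alpha = \tfrac{3}{4}^-$ in $d=3$; in this last case $\alpha$ just misses $d/4 = \tfrac{3}{4}$, which is why the theorem restricts to $\beta = 0$ there. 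For $2<p\leq 12/5$ in $d=3$ the additional integrability translates through the index relation into $\alpha = \tfrac{3}{4}^- + 3(\tfrac{1}{2}-\tfrac{1}{p})$, which strictly exceeds $d/4$ and thus unlocks the nonlinear induction; the full first-order rate is recovered only once $p>12/5$, which is the content of \cref{thm:optimal}.
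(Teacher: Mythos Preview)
Your overall architecture---discrete Duhamel, dual Strichartz control of the $V$-term via the admissible pair with $\frac{1}{r'}=\frac{1}{p}+\frac{1}{r}$, short-time absorption, and the $L^\infty$-induction for the nonlinearity using $\|\Pi_{\tau/4}\phi\|_{L^\infty}\lesssim\tau^{-d/4}\|\phi\|_{L^2}$---matches the paper, as does your reading of the threshold $\alpha>d/4$ that forces $\beta=0$ when $d=3$, $p=2$.

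The gap is in the filter contribution. Your local error contains $(I-\Pi_\tau)f(\psi(t_n))$, and you assert that a Bernstein-type bound handles it; but $(I-\Pi_\tau)$ produces a power of $\tau$ via \cref{eq:Pi-2} only when its argument carries positive Sobolev regularity, and $V\psi$ has none: with $V_1\in L^p$ and $\psi\in H^2\hookrightarrow L^\infty$ one gets $V_1\psi\in L^p$ but not $V_1\psi\in W^{s,q}$ for any $s>0$, so $\|(I-\Pi_\tau)(V_1\psi)\|_{L^{\tilde r'}}$ carries no factor of $\tau$ (and a paraproduct decomposition leaves an equally uncontrollable piece $((I-\Pi_{\tau'})V_1)\psi$). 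The paper sidesteps this by comparing $\psi_\tau^n$ to $\Pi_\tau\psi(t_n)$ rather than to $\psi(t_n)$ and by inserting $g(\Pi_\tau\psi(t_k))$ as an intermediate in \cref{eq:error_eq}: the filter error becomes $\mathcal{E}_1$, built from $g_1(\psi)-g_1(\Pi_\tau\psi)=V_1(I-\Pi_\tau)\psi$, where $(I-\Pi_\tau)$ now lands on $\psi\in H^2$ and \cref{eq:Pi-2} legitimately yields the gain. It is precisely this $\mathcal{E}_1$ estimate (\cref{prop:E1})---balancing the H\"older exponent forced by $p$, the choice of admissible dual pair, and the Sobolev embedding needed to return to the available regularity of $\psi$---that generates the fractional rates $1^-$ and $\frac{3}{4}^-+3(\frac{1}{2}-\frac{1}{p})$; the time-discretization error $\mathcal{E}_2$ is always $O(\tau)$ (\cref{prop:E2}), so the order reduction does not come from the stability pairing itself.
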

	
	As an immediate corollary, we have the following results for the inverse power potential
	\begin{equation}\label{eq:V}
		V(\vx) = \sum_{j=1}^J \frac{Z_j}{|\vx - \vx_j|^\alpha}, \qquad \vx \in \R^d, \  J \in \Z^+, \ \vx_j \in \R^d, \  Z_j \in \R, \  \alpha > 0. 
	\end{equation}
	First, for the most important case of the Coulomb potential in 3D, i.e. \cref{eq:V} with $\alpha = 1$ and $d=3$, we have the following optimal convergence result.  
	\begin{corollary}[Convergence for Coulomb potential]\label{cor:coulomb}
		Let $V$ be given by \cref{eq:V} with $\alpha = 1$ and $d=3$. Assuming that $\psi_0 \in H^2(\R^3)$, we have,  when $0<\tau\leq\tau_0$ with $\tau_0$ sufficiently small, 
		\begin{equation*}
			\| \psi(t_n) - \psi^n_\tau \|_{L^2} \lesssim \tau, \quad 0 \leq n \leq T/\tau. 
		\end{equation*}
	\end{corollary}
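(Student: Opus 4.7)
The plan is to deduce \cref{cor:coulomb} directly from \cref{thm:optimal}. Since the initial-data hypothesis $\psi_0 \in H^2(\R^3)$ is identical, the only thing left to verify is that the 3D Coulomb potential $V(\vx) = \sum_{j=1}^J Z_j/|\vx - \vx_j|$ admits a decomposition in $L^p(\R^3) + L^\infty(\R^3)$ for some exponent $p$ satisfying \cref{eq:pd_def} for $d = 3$, i.e.\ $p > 12/5$.

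To this end I would introduce a smooth cutoff $\chi \in C_c^\infty(\R^3)$ equal to $1$ on some ball $B(\mathbf{0}, R)$ containing every center $\vx_j$ and supported in $B(\mathbf{0}, 2R)$, and split $V = \chi V + (1-\chi) V$. The bounded part $(1-\chi) V$ is immediate: on its support every distance $|\vx - \vx_j|$ is bounded below by a positive constant, so $(1-\chi) V \in L^\infty(\R^3)$. For the singular part $\chi V$, bounding termwise by $\sum_{j=1}^J |Z_j|\, \chi(\vx)/|\vx - \vx_j|$ reduces matters to showing that each compactly supported function $\chi(\vx)/|\vx - \vx_j|$ belongs to $L^p(\R^3)$, which in polar coordinates centered at $\vx_j$ amounts to $\int_0^{2R} r^{2-p}\, dr < \infty$, holding precisely when $p < 3$.

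Thus any $p \in (12/5,\, 3)$ works; concretely $p = 5/2$ yields $V \in L^{5/2}(\R^3) + L^\infty(\R^3)$ with $p$ satisfying \cref{eq:pd_def}, and \cref{thm:optimal} then delivers the claimed estimate $\|\psi(t_n) - \psi_\tau^n\|_{L^2} \lesssim \tau$ for $0 \leq n \leq T/\tau$. I expect no serious obstacle at this stage: all of the genuinely analytic difficulty --- in particular the discrete Strichartz machinery used to absorb the loss of integrability caused by the singular potential --- has already been packaged into \cref{thm:optimal}. The Coulomb case fits precisely because the 3D $1/r$ singularity is borderline integrable in $L^3$, leaving the nontrivial interval $(12/5,\, 3)$ of admissible exponents from which to choose~$p$.
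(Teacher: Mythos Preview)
Your proposal is correct and matches the paper's intent: the paper states \cref{cor:coulomb} as an ``immediate corollary'' of \cref{thm:optimal} without spelling out a proof, and the verification you supply --- that the 3D Coulomb potential lies in $L^{p}(\R^3)+L^\infty(\R^3)$ for every $p\in(\tfrac{12}{5},3)$ via a smooth cutoff near the singular centers --- is exactly the standard argument one expects. One cosmetic point: to guarantee $(1-\chi)V\in L^\infty$, choose $R$ so that all $\vx_j$ lie strictly inside $B(\mathbf{0},R)$ (e.g.\ $\vx_j\in B(\mathbf{0},R/2)$), which ensures a uniform positive lower bound on $|\vx-\vx_j|$ on $\operatorname{supp}(1-\chi)$.
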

	For general forms of the inverse power potential, we have the following. 
	\begin{corollary}\label{cor:inverse_power}
		Let $V$ be given by \cref{eq:V}. Assuming that $\psi_0 \in H^2(\R^d)$, we have, when $0<\tau\leq\tau_0$ with $\tau_0$ sufficiently small, 
		\begin{equation*}
			\| \psi(t_n) - \psi^n_\tau \|_{L^2} \lesssim \left\{
			\begin{aligned}
				&\tau, &&d=1, \  0< \alpha<\frac{1}{2}, \\
				&\tau, &&d=2, \  0< \alpha<1, \\
				&\tau, &&d=3, \  0< \alpha<\frac{5}{4}, \\
				&\tau^{1^-}, && d=3, \  \alpha = \frac{5}{4}, \\
				&\tau^{\frac{3}{4}^- + (\frac{3}{2} - \alpha)}, &&d=3, \ \frac{5}{4} < \alpha < \frac{3}{2}, 
			\end{aligned}
			\right. \quad 0 \leq n \leq T/\tau. 
		\end{equation*}
	\end{corollary}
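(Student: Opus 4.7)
\medskip

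The plan is to reduce everything to an $L^p+L^\infty$ integrability statement on $V$ and then invoke \cref{thm:optimal} or \cref{thm:L2poten}. First, I would observe that each term $Z_j|\vx-\vx_j|^{-\alpha}$ in \cref{eq:V} splits cleanly as
\begin{equation*}
 Z_j|\vx-\vx_j|^{-\alpha} = Z_j|\vx-\vx_j|^{-\alpha}\mathbf{1}_{|\vx-\vx_j|\leq 1} + Z_j|\vx-\vx_j|^{-\alpha}\mathbf{1}_{|\vx-\vx_j|>1},
\end{equation*}
where the second summand is bounded by $|Z_j|$, hence in $L^\infty(\R^d)$, while the first summand lies in $L^p(\R^d)$ precisely when $\alpha p<d$, by a direct polar-coordinate computation of $\int_{|\vy|\leq 1}|\vy|^{-\alpha p}\,\rmd\vy$. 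Summing over $j=1,\dots,J$, this shows $V\in L^p(\R^d)+L^\infty(\R^d)$ for every $p\in[1,d/\alpha)$ (and only for such $p$).

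Next I would match each regime of $(d,\alpha)$ in the statement with the sharpest applicable theorem. For $d=1$ with $0<\alpha<1/2$ we have $d/\alpha>2$, so some $p\geq 2$ is admissible and \cref{thm:optimal} gives the $\tau$ rate. For $d=2$ with $0<\alpha<1$, we have $d/\alpha>2$ and can pick $p\in(2,2/\alpha)$, so \cref{thm:optimal} again yields $\tau$. For $d=3$ with $0<\alpha<5/4$, we have $d/\alpha>12/5$, so there exists $p>12/5$ with $V\in L^p+L^\infty$ and \cref{thm:optimal} applies.

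The two remaining 3D cases fall outside the scope of \cref{thm:optimal} and must be treated via \cref{thm:L2poten} by optimizing over admissible $p$. When $\alpha=5/4$, we have $d/\alpha=12/5$, so $V\in L^p+L^\infty$ for every $p<12/5$ but not for any $p\geq 12/5$; taking $p\nearrow 12/5$ in the exponent $\tfrac{3}{4}^- + 3(\tfrac12-\tfrac1p)$ yields $1^-$, which is exactly the stated rate. When $5/4<\alpha<3/2$, admissibility requires $p<3/\alpha$, and taking $p\nearrow 3/\alpha$ gives the exponent $\tfrac{3}{4}^- + 3(\tfrac12-\tfrac{\alpha}{3}) = \tfrac{3}{4}^- + (\tfrac32-\alpha)$; the infinitesimal loss from $p<3/\alpha$ can be absorbed into the $\tfrac{3}{4}^-$ notation, producing the claimed bound.

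There is no real obstacle here beyond bookkeeping, since the corollary is a direct application of the two main theorems. The only point that deserves care is the matching in the borderline case $\alpha=5/4$, where the supremum of rates available from \cref{thm:L2poten} (taken over $p$ for which $V$ is integrable) must be computed and shown to approach—but not reach—first order; this is why the rate is stated as $\tau^{1^-}$ rather than $\tau$, consistent with the fact that \cref{thm:optimal} cannot be applied at this threshold value.
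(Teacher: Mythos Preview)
Your proposal is correct and matches the paper's approach: the corollary is stated there as an immediate consequence of \cref{thm:optimal,thm:L2poten}, and your reduction---splitting each $|\vx-\vx_j|^{-\alpha}$ into a near-singularity $L^p$ part (with $p<d/\alpha$) and a bounded tail, then selecting $p$ optimally in each regime---is precisely the intended verification. Your handling of the borderline cases (absorbing the loss from $p\nearrow 3/\alpha$ into the existing $\frac{3}{4}^-$) is also the right reading of the $a^-$ notation used in the paper.
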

	
	\begin{remark}[$H^1$-norm convergence]
		Due to the filter function $\Pi_\tau$ in the EWI \cref{eq:EWI_scheme}, the $H^1$-norm convergence follows immediately from the $L^2$-norm convergence with \cref{lem:Pi} and the $H^2$-regularity of the exact solution $\psi$. Hence, in all the cases, the $H^1$-norm convergence order is half-order lower than the corresponding $L^2$-norm convergence order. We omit the details for brevity. 
	\end{remark}
	
	%	Consider the time interval $[m_j \tau, m_j \tau + n_0 \tau]$ with $ T_0 = n_0 \tau$. We have, for $0 \leq n \leq n_0$, 
	%	\begin{equation}
		%		\Pi_\tau \psi(m_j\tau + n\tau) = \Stau(n\tau) \Pi_\tau \psi(m_j \tau) - i \int_0^{n\tau} \Stau(n\tau - s) \Pi_\tau (V \psi)(m_j \tau + s) \rmd s, 
		%	\end{equation}
	%	and
	%	\begin{equation}
		%		\psi_\tau(m_j\tau + n\tau) =  \Stau(n \tau) \psi_\tau(m_j \tau) -i \tau \sum_{k=0}^{n-1} \Stau((n-1)\tau - k\tau) \vphi_1(i \tau \Delta) \Pi_\tau((V \psi_\tau) (m_j\tau + k \tau)). 
		%	\end{equation}
	
	\section{Technical tools and estimates}\label{sec:3}
	\subsection{Estimates for the nonlinearity}
	In the following, to present the proof for \cref{thm:optimal,thm:L2poten} in a unified framework, we assume that, for some $2 \leq p_0 < \infty$,
	\begin{equation}\label{eq:assump_V}
		V \in L^{p_0}(\R^d) + L^\infty(\R^d).    
	\end{equation}
	Then there exist $ V_1 \in L^{p_0}(\R^d)$  and $V_2 \in L^\infty(\R^d)$ such that $V(\vx) = V_1(\vx) + V_2(\vx)$. Define, for $\phi: \R^d \rightarrow \C$, 
	\begin{equation}\label{eq:g_def}
		g(\phi) = g_1(\phi) + g_2(\phi), 
	\end{equation}
	where
	\begin{equation}\label{eq:g12_def}
		g_1(\phi):=V_1 \phi, \quad g_2(\phi):=(V_2 + \beta|\phi|^{2\sigma})\phi. 
	\end{equation}
	For $g_1$ and $g_2$, we have the following estimates. 
	\begin{lemma}\label{lem:g1}
		Let $V_1 \in L^{p_0}(\R^d)$ with $2 \leq p_0 < \infty$. Set 
		\begin{equation}\label{eq:r0}
			r_0 := \frac{2p_0}{p_0-1}, \qquad \tilde r_0 := \frac{p_0}{p_0-2} \ (\tilde r_0 := \infty \text{ if } p_0=2). 
		\end{equation}
		For $g_1$ in \cref{eq:g12_def}, we have
		\begin{align}
			&\| g_1(v) - g_1(w) \|_{L^{r_0'}} \leq \| V_1 \|_{L^{p_0}} \| v-w \|_{L^{r_0}},   \label{eq:g1-1}\\
			&\| g_1(v) - g_1(w) \|_{L^{\tilde r_0'}} \leq \| V_1 \|_{L^{p_0}} \| v-w \|_{L^{p_0}}.  \label{eq:g1-2}
		\end{align}
	\end{lemma}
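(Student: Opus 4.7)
The plan is to observe that $g_1$ is linear in its argument, so
\begin{equation*}
g_1(v) - g_1(w) = V_1 (v - w),
\end{equation*}
and both bounds then reduce to H\"older's inequality with exponent triples tailored to $p_0$. The only real task is to check that the indices $r_0, r_0'$ and $\tilde r_0, \tilde r_0'$ defined in \cref{eq:r0} sit correctly inside the H\"older relation of the form $\frac{1}{\text{target}} = \frac{1}{p_0} + \frac{1}{\text{input}}$.

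For \cref{eq:g1-1} I would first compute $r_0' = \frac{2p_0}{p_0+1}$ from $r_0 = \frac{2p_0}{p_0-1}$ and verify the identity $\frac{1}{r_0'} = \frac{1}{p_0} + \frac{1}{r_0}$ by a one-line fraction manipulation. A single application of H\"older's inequality to the product $V_1 \cdot (v-w)$ then yields the bound. For \cref{eq:g1-2} I would compute $\tilde r_0' = \frac{p_0}{2}$ from $\tilde r_0 = \frac{p_0}{p_0-2}$, check $\frac{1}{\tilde r_0'} = \frac{2}{p_0} = \frac{1}{p_0} + \frac{1}{p_0}$, and apply H\"older again. The endpoint case $p_0 = 2$, where $\tilde r_0 = \infty$ and $\tilde r_0' = 1$, reduces to Cauchy--Schwarz and should be handled as a degenerate instance of the same estimate.

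There is no meaningful obstacle at this step; the lemma is essentially a bookkeeping device, and the content lies in the choice of exponent pairs rather than in any analytic inequality. The purpose of that choice will only become visible in \cref{sec:4}: $r_0$ is picked so that its dual pairs with a Strichartz-admissible space--time index once the free propagator is brought in, while the second estimate provides a companion bound whose input norm is fixed to $L^{p_0}$, matching the integrability supplied by the discrete Strichartz estimates \cref{lem:dS1,lem:dS2,lem:dSc}. I would therefore record the two one-line H\"older computations and move on to the subsequent estimates on $g_2$ and on the local truncation error.
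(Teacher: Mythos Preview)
Your proposal is correct and matches the paper's proof essentially line for line: the paper also writes $g_1(v)-g_1(w)=V_1(v-w)$, records the general H\"older estimate $\|V_1\phi\|_{L^{\tilde r'}}\le\|V_1\|_{L^{p_0}}\|\phi\|_{L^r}$ under $\tfrac{1}{\tilde r}+\tfrac{1}{p_0}+\tfrac{1}{r}=1$, and then specializes to $\tilde r=r=r_0$ for \cref{eq:g1-1} and to $\tilde r=\tilde r_0$, $r=p_0$ for \cref{eq:g1-2}. Your remark about the endpoint $p_0=2$ reducing to Cauchy--Schwarz is also correct, though the paper simply absorbs it into the same formula without comment.
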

	
	\begin{proof}
		By H\"older's inequality, we have, for $1 \leq \tilde r', r \leq \infty$, 
		\begin{equation}\label{eq:Vphi_general}
			\| V_1 \phi \|_{L^{\tilde r'}} \leq \| V_1 \|_{L^{p_0}} \| \phi \|_{L^{r}}, \quad \frac{1}{\tilde r} + \frac{1}{p_0} + \frac{1}{r} = 1. 
		\end{equation}
		Taking $\tilde r = r = r_0$ in \cref{eq:Vphi_general}, we obtain \cref{eq:g1-1}. Taking $\tilde r = \tilde r_0$ and $r = p_0$ in \cref{eq:Vphi_general}, we obtain \cref{eq:g1-2}.  
	\end{proof}
	
	\begin{lemma}\label{lem:g2}
		Let $V_2 \in L^\infty(\R^d)$, $\beta \in \R$, and $\sigma \in \R^+$. For $g_2$ in \cref{eq:g12_def}, we have
		\begin{equation*}
			\| g_2(v) - g_2(w) \|_{L^2} \lesssim  (\| V_2 \|_{L^\infty} + \| v \|_{L^\infty}^{2\sigma} + \| w \|_{L^\infty}^{2\sigma}) \| v-w \|_{L^2}. 
		\end{equation*}
	\end{lemma}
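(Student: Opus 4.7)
The plan is to split $g_2(v)-g_2(w)$ into a linear potential part and a pure-power nonlinear part and bound each separately in $L^2$. Writing
\begin{equation*}
g_2(v)-g_2(w) = V_2(v-w) + \beta\bigl(|v|^{2\sigma}v - |w|^{2\sigma}w\bigr),
\end{equation*}
the first term is immediately controlled by H\"older's inequality as $\|V_2(v-w)\|_{L^2}\leq \|V_2\|_{L^\infty}\|v-w\|_{L^2}$, which already supplies the $\|V_2\|_{L^\infty}$ contribution on the right-hand side.

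For the nonlinear term, the key pointwise inequality I would establish first is
\begin{equation*}
\bigl||v|^{2\sigma}v - |w|^{2\sigma}w\bigr| \lesssim \bigl(|v|^{2\sigma}+|w|^{2\sigma}\bigr)\,|v-w|,
\end{equation*}
valid for all $v,w\in\C$ and any $\sigma>0$, with an implicit constant depending only on $\sigma$. I would prove this by parametrizing along the segment $z(s)=w+s(v-w)$ for $s\in[0,1]$, applying the fundamental theorem of calculus to the $C^1$ (in fact locally Lipschitz is enough) map $z\mapsto |z|^{2\sigma}z$ viewed as a map $\R^2\to\R^2$, and noting that its (real) Jacobian has norm $\lesssim |z|^{2\sigma}$. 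Then $|z(s)|\leq |v|+|w|$ for $s\in[0,1]$, so $|z(s)|^{2\sigma}\lesssim |v|^{2\sigma}+|w|^{2\sigma}$ (using $(a+b)^{2\sigma}\lesssim a^{2\sigma}+b^{2\sigma}$ up to a $\sigma$-dependent constant), and integration in $s$ yields the claim.

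Given this pointwise bound, I would finish by applying H\"older with $L^\infty \times L^2$:
\begin{equation*}
\bigl\| |v|^{2\sigma}v - |w|^{2\sigma}w \bigr\|_{L^2} \lesssim \bigl\| |v|^{2\sigma}+|w|^{2\sigma} \bigr\|_{L^\infty} \|v-w\|_{L^2} \lesssim \bigl(\|v\|_{L^\infty}^{2\sigma}+\|w\|_{L^\infty}^{2\sigma}\bigr)\|v-w\|_{L^2}.
\end{equation*}
Combining this with the bound for the $V_2$ piece and absorbing $|\beta|$ into the implicit constant (since $\beta$ is fixed) gives the claimed inequality.

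The only slightly delicate point is the pointwise Lipschitz bound for $z\mapsto|z|^{2\sigma}z$ on $\C$ when $\sigma\in(0,1/2)$, where the map is only H\"older near the origin in the naive sense, but because we multiply by the extra factor $z$ the full map is in fact $C^1$ away from $0$ with Jacobian of size $|z|^{2\sigma}$, and one can handle the origin by a short limiting argument (or by noting that both sides of the desired inequality are homogeneous of degree $2\sigma+1$ and reducing to the unit sphere). This is really the only non-routine step; everything else is H\"older.
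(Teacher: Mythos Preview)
Your argument is correct and entirely standard: the paper does not supply its own proof of this lemma but simply refers to \cite[Lemma~3.2]{bao2023_semi_smooth}, and the proof there proceeds exactly along the lines you describe (splitting off the $V_2$ part and using the pointwise Lipschitz bound $\bigl||v|^{2\sigma}v-|w|^{2\sigma}w\bigr|\lesssim(|v|^{2\sigma}+|w|^{2\sigma})|v-w|$ followed by H\"older with $L^\infty\times L^2$). One minor remark: your caution about small $\sigma$ is unnecessary, since for every $\sigma>0$ the map $z\mapsto|z|^{2\sigma}z$ is genuinely $C^1$ on all of $\C\cong\R^2$ (the Jacobian at $0$ exists and equals $0$, and $\|Df(z)\|\lesssim|z|^{2\sigma}\to0$ as $z\to0$), so the segment argument goes through directly without any limiting or homogeneity workaround.
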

	The proof of \cref{lem:g2} can be found in \cite[Lemma 3.2]{bao2023_semi_smooth} and is thus omitted. 
	
	%			\begin{equation}\label{eq:Vphi}
		%					\| V_1 \phi \|_{L^{r_0'}} \leq \| V_1 \|_{L^{p_0}} \| \phi \|_{L^{r_0}}, \quad r_0 := \frac{2p_0}{p_0-1} \in (2, 4],  
		%				\end{equation}
	%			\begin{equation}
		%					\| V_1 \phi \|_{L^{\tilde r_0'}} \leq \| V_1 \|_{L^{p_0}} \| \phi \|_{L^{p_0}}, \quad \tilde r_0 := \frac{p_0}{p_0-2}. 
		%				\end{equation}
	%			where
	%			\begin{equation*}
		%					2 \leq \tilde r, r < \kappa(d):=\left\{
		%					\begin{aligned}
			%							&2d/(d-2), &&d = 3, \\
			%							&\infty, &&d=1,2. 
			%						\end{aligned}
		%					\right. 
		%				\end{equation*}

	%	Moreover, we have, by Young's inequality and H\"older's inequality, 
	%	\begin{equation*}
		%		\| (W_1 \ast (uv))w \|_{L^{r'}} \leq \| W_1 \|_{L^{p_2}} \| u \|_{L^{q_1}} \| v \|_{L^{q_2}} \| w \|_{L^{q_3}}, 
		%	\end{equation*}
	%	where
	%	\begin{equation*}
		%		\frac{1}{q_1}+\frac{1}{q_2}+\frac{1}{q_3}+\frac{1}{p_2}+\frac{1}{r}=2, 
		%	\end{equation*}
	%	which yields
	%	\begin{align}
		%		&\| (W_1 \ast (uv))w \|_{L^{r_3'}} \leq \| W_1 \|_{L^{p_2}} \| u \|_{L^{r_3}} \| v \|_{L^{r_3}} \| w \|_{L^{r_3}}, \quad r_3 = \frac{4p_2}{2p_2 - 1}, \\ 
		%		&\| (W_1 \ast (uv))w \|_{L^2} \leq \| W_1 \|_{L^1} \| u \|_{L^\infty} \| v \|_{L^\infty} \| w \|_{L^2}, \\
		%		&\| (W_1 \ast (uv))w \|_{L^2} \leq \| W_1 \|_{L^1} \| u \|_{L^\infty} \| v \|_{L^2} \| w \|_{L^\infty}. 
		%	\end{align}
	%	Then we collect the estimates for $g_j \ (j=1, 2, 3)$ as follows.

	\subsection{Discrete Strichartz estimates}
	We recall the following properties of $\Pi_\tau$. 
	\begin{lemma}\label{lem:Pi}
		For any $1 \leq p < \infty$ and $\phi:\R^d \rightarrow \C$ measurable, we have
		\begin{align}
			&\| \Pi_\tau \phi \|_{L^p} \lesssim \| \phi \|_{L^p}, \label{eq:Pi-1}\\
			&\| \phi - \Pi_\tau \phi \|_{L^p} \lesssim \tau^\gamma \| (-\Delta)^{\gamma} \phi \|_{L^p}, \qquad \gamma \geq 0.   \label{eq:Pi-2}
		\end{align}
	\end{lemma}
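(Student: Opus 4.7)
The plan is to realize $\Pi_\tau$ as a Fourier multiplier / convolution operator and then deduce both bounds from Young's convolution inequality. Set $K := \mathcal{F}^{-1}\chi$; since $\chi \in C_c^\infty(\R^d)$, $K$ is a Schwartz function, and in particular $K \in L^1(\R^d)$. From \cref{eq:Pitau_def} one checks that $\Pi_\tau\phi = K_\tau \ast \phi$ with $K_\tau(\vx) := \tau^{-d/2}K(\vx/\tau^{1/2})$, and a change of variables shows $\|K_\tau\|_{L^1} = \|K\|_{L^1}$ independently of $\tau$. Given this, \cref{eq:Pi-1} follows at once from Young's inequality: $\|\Pi_\tau\phi\|_{L^p} \leq \|K_\tau\|_{L^1}\|\phi\|_{L^p} = \|K\|_{L^1}\|\phi\|_{L^p}$.

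For \cref{eq:Pi-2}, the case $\gamma=0$ is immediate from the triangle inequality combined with \cref{eq:Pi-1}. For $\gamma>0$, the idea is to pull $|\vxi|^{2\gamma}$ out of the Fourier symbol of $\mathrm{Id}-\Pi_\tau$ so that it can be absorbed into $(-\Delta)^\gamma$. Writing
\[
1-\chi(\tau^{1/2}\vxi) \;=\; \tau^\gamma\, \tilde m(\tau^{1/2}\vxi)\,|\vxi|^{2\gamma}, \qquad \tilde m(\bm\eta) := \frac{1-\chi(\bm\eta)}{|\bm\eta|^{2\gamma}},
\]
a short computation on the Fourier side yields
\[
\phi - \Pi_\tau\phi \;=\; \tau^\gamma\, \tilde K_\tau \ast (-\Delta)^\gamma\phi, \qquad \tilde K_\tau(\vx) := \tau^{-d/2}\tilde K(\vx/\tau^{1/2}),\quad \tilde K := \mathcal{F}^{-1}\tilde m.
\]
Applying Young's inequality once more gives $\|\phi-\Pi_\tau\phi\|_{L^p} \leq \tau^\gamma\|\tilde K\|_{L^1}\|(-\Delta)^\gamma\phi\|_{L^p}$, which is precisely \cref{eq:Pi-2}, provided $\|\tilde K\|_{L^1}$ is finite and independent of $\tau$.

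The only nontrivial step is therefore the verification that $\tilde K \in L^1(\R^d)$, and I expect this to be the main technical obstacle. The function $\tilde m$ is $C^\infty$ on all of $\R^d$ (it vanishes in a neighborhood of the origin because $1-\chi$ does, and is smooth away from the origin as a quotient of smooth functions) and satisfies the symbol-type bound $|\partial^{\alpha}\tilde m(\bm\eta)| \lesssim_\alpha |\bm\eta|^{-2\gamma-|\alpha|}$ for $|\bm\eta|$ large. I would argue by integration by parts in the inverse-Fourier integral defining $\tilde K$: for any $N$ such that $\partial^\alpha \tilde m \in L^1(\R^d)$ for $|\alpha|=N$ (i.e.\ once $N > d-2\gamma$), one obtains $|\vx|^N|\tilde K(\vx)| \lesssim 1$, giving rapid decay of $\tilde K$ at infinity. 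Local integrability near the origin follows from a standard Riesz-potential-style decomposition of $\tilde m$ (a smooth cutoff separating a bounded frequency annulus from the polynomially decaying tail). This is classical harmonic analysis, entirely independent of $\tau$, so the whole argument closes.
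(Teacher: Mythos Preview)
The paper does not give its own proof of \cref{lem:Pi}; it is stated as a recalled property of the filter $\Pi_\tau$, with the implicit understanding that it follows from standard arguments (the surrounding references to \cite{ignat2011,LRI_error_1,choi2021,choi2025} contain such results). Your approach via Young's inequality is exactly the standard route and is correct: writing $\Pi_\tau$ and $\mathrm{Id}-\Pi_\tau$ as convolutions with rescaled $L^1$ kernels immediately yields both \cref{eq:Pi-1} and \cref{eq:Pi-2}, uniformly in $\tau$.

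The only place where your sketch is thin is the local integrability of $\tilde K=\mathcal{F}^{-1}\tilde m$ near the origin. Your ``Riesz-potential-style decomposition'' is the right idea but deserves a short case split: for $0<2\gamma<d$ one writes $\tilde m=|\bm\eta|^{-2\gamma}-\chi(\bm\eta)|\bm\eta|^{-2\gamma}$, so that $\tilde K=c_\gamma|\vx|^{2\gamma-d}-(\text{smooth})$ is locally integrable; for $2\gamma>d$ one has $\tilde m\in L^1$ and hence $\tilde K\in L^\infty$; and in the borderline case $2\gamma=d$ one uses $\tilde m\in L^p$ for all $1<p\le 2$ together with Hausdorff--Young to get $\tilde K\in L^2_{\mathrm{loc}}\subset L^1_{\mathrm{loc}}$. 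With this clarification the argument is complete.
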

	%	We need two particular cases of \cref{eq:Pi-1}: Choosing $q = p$, we have
	%	\begin{equation}
		%		\| \Pi_\tau \phi \|_{L^p} \lesssim \| \phi \|_{L^p}, 
		%	\end{equation}
	%	and choosing $q = \infty$ and $p =2$, we have
	%	\begin{equation}
		%		\| \Pi_\tau \phi \|_{L^\infty} \lesssim \tau^{-\frac{d}{4}} \| \phi \|_{L^2}. 
		%	\end{equation}
	
	Let $\phi:\R^d \times \R \rightarrow \C$ be measurable. For any interval $I \subset \R$, we define the norms $L^q(I; L^r)$ with $1 \leq q, r \leq \infty$ as
	\begin{equation}
		\| \phi \|_{L^q(I; L^r)}:=
		\left\{
		\begin{aligned}
			&\left(\int_I \| \phi(\cdot, t) \|_{L^r}^q \rmd t\right)^\frac{1}{q}, &&1 \leq q < \infty, \\
			&\esssup_{t \in I} \| \phi(\cdot, t) \|_{L^r}, && q = \infty. 
		\end{aligned}
		\right. 
	\end{equation}
	At the time discrete level, for any $I \subset \R$ and $\tau > 0$, we define, for a sequence $(\phi^k(\vx))_{k \in \mathbb{Z}}$, 
	\begin{equation}\label{eq:norm_def}
		\| \phi \|_{l^q_\tau(I; L^r)} := 
		\left\{
		\begin{aligned}
			&\left(\tau \sum_{\tau k \in I} \| \phi^k \|_{L^r}^q\right)^\frac{1}{q}, &&1 \leq q < \infty, \\
			&\sup_{\tau k \in I} \| \phi^k \|_{L^r}, && q = \infty. 
		\end{aligned}
		\right.
	\end{equation}
	Define the filtered Schr\"odinger semigroup
	\begin{equation}
		\Stau(t) = e^{i t \Delta}\Pi_{\tau/4}, \quad t \in \R. 
	\end{equation}
	A pair $(q, r) \in [2, \infty] \times [2, \infty]$ is admissible if
	\begin{equation}\label{eq:admissible}
		\frac{2}{q} = d\left(\frac{1}{2} - \frac{1}{r} \right), \qquad (q, r, d) \neq (2, \infty, 2). 
	\end{equation}
	By Theorem 2.2 and Corollary 2.4 in \cite{choi2021}, and Theorem 4.2 in \cite{LRI_error_1} (also \cite{ignat2011} and \cite{choi2025}), we have the following discrete Strichartz estimates: 
	%	Excluding the end-point cases with $q = 2$ when $d \geq 3$,  
	
		(i) Let $(q, r)$ be admissible, for all $\phi \in L^2(\R^d)$,  we have
		\begin{equation}\label{lem:dS1}
			\| S_\tau(\cdot) \phi \|_{l_\tau^q(\tau \Z; L^r)} \leq C(d, q) \| \phi \|_{L^2}. 
		\end{equation}
		
		(ii) Let $(q, r)$, $(\tilde q, \tilde r)$ be admissible and $(q, \tilde q) \neq (2, 2)$, for all $f \in l_\tau^{\tilde q'}(\tau \Z; L^{\tilde r'}(\R^d))$ and $\theta \in [-1, 1]$, we have
		\begin{equation}\label{lem:dS2}
			\left\| \tau \sum_{k = -\infty}^{n-1} S_\tau((n-k+\theta)\tau) f^k \right\|_{l_\tau^q(\tau \Z; L^r)} \leq C(d, q, \tilde q) \| f \|_{l_\tau^{\tilde q'}(\tau \Z; L^{\tilde r'})}. 
		\end{equation}
		
	(iii) Let $(q, r)$, $(\tilde q, \tilde r)$ be admissible and $(q, \tilde q) \neq (2, 2)$, for all $f \in L^{\tilde q'}(\tau \Z; L^{\tilde r'}(\R^d))$, we have
		\begin{equation}\label{lem:dSc}
			\left\| \int_{-\infty}^{n\tau} S_\tau(n\tau - s) f(s) \rmd s \right \|_{l_\tau^q(\tau \Z; L^r)} \leq C(d, q, \tilde{q}) \| f \|_{L^{\tilde q'}(\R; L^{\tilde r'})}. 
		\end{equation}

	With the use of \cref{lem:dSc}, we can get rid of establishing the discrete-time estimates in $l^q_\tau L^r$-norms for the exact solution $\psi$ to \cref{NLSE}. 
	\begin{remark}
		For any sequence $(g^k(\vx))_{k \in \Z}$ and $N_0 \geq 0$, by setting $f^k = g^k \mathbf{1}_{[0, N_0]}(k)$ in \cref{lem:dS2} with $\mathbf{1}_D$ being the indicator function of a set $D$, we have
		\begin{equation}\label{lem:dS2t}
			\left\| \tau \sum_{k = 0}^{n-1} S_\tau((n-k+\theta)\tau) g^k \right\|_{l_\tau^q([0, (N_0+1)\tau]; L^r)} \leq C(d, q, \tilde q) \| g \|_{l^{\tilde q'}_\tau([0, N_0\tau]; L^{\tilde r'})}\,.
		\end{equation}
		Similarly, for any $g:\R^d \times \C \rightarrow \C$ and $T_0 \geq 0$, by setting $f(s) = g(s) \mathbf{1}_{[0, T_0]}(s) $ in \cref{lem:dSc}, we have
		\begin{equation}\label{lem:dSct}
			\left\| \int_0^{n\tau} S_\tau(n\tau - s) g(s) \rmd s \right \|_{l_\tau^q([0, T_0]; L^r)} \leq C(d, q, \tilde{q}) \| g \|_{L^{\tilde q'}([0, T_0]; L^{\tilde r'})}\,.
		\end{equation}
	\end{remark}
	
	\section{Error estimate}\label{sec:4}
	In this section, we present the proof of the main results \cref{thm:optimal,thm:L2poten}. 
	
	We first highlight some particular admissible pairs that will be used in the proof. First, according to \cref{eq:admissible}, $(q, r) = (\infty, 2)$ is always admissible. Recalling \cref{eq:r0}, we have the following admissible pair. 
	\begin{lemma}\label{lem:pair1}
		For any $2 \leq p_0<\infty$ and $d=1,2,3$, by setting
		\begin{equation*}
			q_0 := \frac{4p_0}{d},  
		\end{equation*}
		we have $(q_0, r_0)$ is admissible and $q_0 \neq 2$. 
	\end{lemma}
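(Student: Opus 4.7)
The plan is to verify directly the admissibility conditions of \cref{eq:admissible} by substitution, since both $r_0$ and $q_0$ are given by explicit formulas. There is no real obstacle here; the statement is a bookkeeping fact that will be invoked repeatedly in \cref{sec:4}.

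First I would compute $\frac{1}{r_0}$ from the definition $r_0 = \frac{2p_0}{p_0-1}$ in \cref{eq:r0}, obtaining $\frac{1}{r_0} = \frac{p_0-1}{2p_0}$, and hence
\begin{equation*}
\frac{1}{2} - \frac{1}{r_0} = \frac{1}{2} - \frac{p_0-1}{2p_0} = \frac{1}{2p_0}.
\end{equation*}
Multiplying by $d$ and comparing with $\frac{2}{q_0} = \frac{2d}{4p_0} = \frac{d}{2p_0}$ shows that the scaling identity in \cref{eq:admissible} is satisfied.

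Next I would check that $(q_0, r_0) \in [2,\infty] \times [2,\infty]$. Since $p_0 \geq 2$, we have $r_0 = 2 + \frac{2}{p_0-1} \in (2, 4]$, so $r_0$ lies in the admissible range (in particular $r_0 < \infty$). For $q_0 = \frac{4p_0}{d}$, the condition $p_0 \geq 2$ together with $d \leq 3$ gives $q_0 \geq \frac{8}{3} > 2$, so $q_0 \in (2, \infty)$; in particular $q_0 \neq 2$, which also ensures that the forbidden triple $(2, \infty, 2)$ is avoided automatically.

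Putting these two verifications together yields $(q_0, r_0)$ admissible with $q_0 \neq 2$, which is exactly the claim. The whole argument is a short direct computation, so the write-up should be only a few lines.
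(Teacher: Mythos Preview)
Your proposal is correct and is exactly the direct verification the paper has in mind; the paper in fact states \cref{lem:pair1} without proof, since the admissibility relation \cref{eq:admissible} and the range checks follow immediately from the explicit formulas for $r_0$ and $q_0$, just as you compute.
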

	Then we define a constant 
	\begin{equation*}
		M:=\max_{(q, r) \in \mathcal{I}}\{\| \psi \|_{L^\infty([0, T]; H^2)}, \| \psi \|_{L^\infty[0, T]; L^\infty)}, \| \psi \|_{W^{1, q}([0, T]; L^{r})}\} < \infty, 
	\end{equation*}
	where $\mathcal{I} \supset \{(\infty, 2), (q_0, r_0)\}$ is a finite set of admissible pairs, which will be clear from the proof presented later. That $M < \infty$ follows from \cref{eq:H2-solution} and \cite[Theorem 4.8.1]{cazenave2003} which proves that for any admissible pair $(q, r)$, 
	\begin{equation*}
		\psi \in W_\text{loc}^{1, q}([0, T_\text{max}); L^r(\R^d)). 
	\end{equation*} 
	%	Define
	%	\begin{equation}
		%		, \qquad \tilde q_0 =  : 
		%	\end{equation}
	%	we have $(q_0, r_0)$ is an admissible pair. 
	%	
	%	\begin{proof}
		%		For $2 \leq p_0 < \infty$, we have $r_0 = \frac{2p_0}{p_0-1} \in (2, 4]$. 
		%	\end{proof}
	%	
	%	\begin{lemma}
		%		We have $(\tilde q_0, \tilde r_0)$ is admissible for $p^\ast < p_0 < \infty$. 
		%	\end{lemma}
	%	
	%	\begin{proof}
		%		To ensure that $2 \leq \tilde r_0 < \kappa(d)$, we need $p_0 > p^\ast$. 
		%		%		\begin{equation}
			%			%			p_0 > \delta(d):= \left\{
			%			%			\begin{aligned}
				%				%				&\frac{12}{5}, &&d=3, \\
				%				%				&2, && d = 1, 2. 
				%				%			\end{aligned}
			%			%			\right.  
			%			%		\end{equation}
		%	\end{proof}
	
	Then we proceed to the error estimate of the EWI \cref{eq:EWI_scheme}. By the definition of $\Pi_\tau$ in \cref{eq:Pitau_def}, we immediately have that
	\begin{equation}\label{eq:Pitau_property}
		\Pi_\tau  = \Pi_{\tau/4}\Pi_\tau = \Pi_\tau \Pi_{\tau/4}. 
	\end{equation}
	Recalling \cref{eq:g_def} and iterating \cref{eq:EWI_scheme}, noting \cref{eq:Pitau_property}, we obtain
	\begin{align}\label{eq:EWI}
		\psi^n_\tau 
		&= e^{i n \tau \Delta} \psi^0_\tau -i \tau \sum_{k=0}^{n-1} e^{i(n-1-k)\tau\Delta} \vphi_1(i \tau \Delta) \Pi_\tau g(\psi^k_\tau) \notag \\
		&= \Stau(n\tau) \psi^0_\tau -i \tau \sum_{k=0}^{n-1} \Stau((n-1)\tau - k \tau) \vphi_1(i \tau \Delta) \Pi_\tau g(\psi^k_\tau), \quad 0 \leq n \leq T/\tau. 
	\end{align}
	By Duhamel's formula, we have
	\begin{equation}\label{eq:duhamel_ori}
		\psi(n\tau) = e^{i n \tau \Delta} \psi_0 - i \int_0^{n\tau} e^{i(n \tau - s)\Delta} g (\psi(s)) \rmd s, \quad 0 \leq n \leq T/\tau. 
	\end{equation}
	Applying $\Pi_\tau $ on both sides of \cref{eq:duhamel_ori}, recalling \cref{eq:Pitau_property}, we have
	\begin{equation}\label{eq:duhamel}
		\Pi_\tau \psi(n\tau) = \Stau(n\tau) \Pi_\tau \psi_0 - i \int_0^{n\tau} \Stau(n \tau - s) \Pi_\tau g (\psi(s)) \rmd s. 
	\end{equation}
	%	Define the error functions $e^n_\tau: = \Pi_\tau \psi(t_n) - \psi^n_\tau$ for $0 \leq n \leq T/\tau$. 
	Subtracting \cref{eq:EWI} from \cref{eq:duhamel}, we obtain, for $0 \leq n \leq T/\tau$, 
	\begin{align}\label{eq:error_eq}
		\Pi_\tau \psi(n\tau) - \psi_\tau^n 
		&= \Stau(n\tau) \Pi_\tau \psi_0 - \Stau(n\tau) \psi_\tau^0 
		- i \int_0^{n\tau} \Stau(n \tau - s) \Pi_\tau g(\psi(s)) \rmd s \notag \\
		&\quad +i \tau \sum_{k=0}^{n-1} \Stau((n-1)\tau - k \tau) \vphi_1(i \tau \Delta) \Pi_\tau g(\psi_\tau^k) \notag \\
		&= \mathcal{E}_0^n + \mathcal{E}_1^n + \mathcal{E}_2^n + Z^n, 
	\end{align}
	where
	\begin{align}
		&\mathcal{E}_0^n := \Stau(n\tau) (\Pi_\tau \psi_0 - \psi^0_\tau), \label{eq:E0}\\
		& \mathcal{E}_1^n := - i \int_0^{n\tau} \Stau(n \tau - s) \Pi_\tau \left(g(\psi(s)) - g(\Pi_\tau \psi(s)) \right) \rmd s, \label{eq:E1}\\
		&\mathcal{E}_2^n := - i \int_0^{n\tau} \Stau(n \tau - s) \Pi_\tau g(\Pi_\tau \psi(s)) \rmd s \notag \\
		&\qquad\ \  + i \tau \sum_{k=0}^{n-1} \Stau((n-1)\tau - k \tau) \vphi_1(i \tau \Delta) \Pi_\tau g(\Pi_\tau\psi(k\tau)), \label{eq:E2}\\
		%		&\quad -i \tau \sum_{k=0}^{n-1} \Stau((n-1)\tau - k \tau) \vphi_1(i \tau \Delta) \Pi_\tau (V (I - \Pi_\tau)\psi(k \tau) ) \\
		&Z^n := - i \tau \sum_{k=0}^{n-1} \Stau((n-1)\tau - k \tau) \vphi_1(i \tau \Delta) \Pi_\tau ( g(\Pi_\tau \psi(k\tau)) - g(\psi^k_\tau)). \label{eq:Z_def}
	\end{align}
	In the following, we estimate the four terms, respectively. From \cref{eq:E0}, direct application of \cref{lem:dS1} yields that for any admissible pair $(q, r)$ and interval $I \subset \R$, 
	\begin{equation}\label{eq:E0_est}
		\| \mathcal{E}_0 \|_{l^q_\tau(I; L^r)} \leq \| \mathcal{E}_0 \|_{l^q_\tau(\R; L^r)} \leq \widetilde{C} \| \Pi_\tau \psi_0 - \psi_\tau^0 \|_{L^2}, 
	\end{equation}
	where $\widetilde{C}$ is the constant in \cref{lem:dS1}. In fact, $\Pi_\tau \psi_0 - \psi_\tau^0 = 0$ in the above. We keep this term since it will be used in extending the local error estimates on some short interval to the global error estimate on $[0, T]$ (see \cref{eq:induction_I1}). 
	
	We start with $\mathcal{E}_2$ \cref{eq:E2} which is the local truncation error due to the time discretization in the EWI \cref{eq:EWI_scheme}. 
	\begin{proposition}\label{prop:E2}
		For any admissible pairs $(q, r)$ with $q \neq 2$, we have
		\begin{equation*}
			\| \mathcal{E}_2 \|_{l_\tau^q([0, T]; L^r)} \leq C(M, V, T) \tau. 
		\end{equation*}
	\end{proposition}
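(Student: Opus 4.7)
The plan is to first rewrite $\mathcal{E}_2^n$ as a single telescoped integral of the same form as $\mathcal{E}_1^n$, and then bound it via the discrete inhomogeneous Strichartz estimate \cref{lem:dSct}. For the first step, I use the identity $\vphi_1(z) = \int_0^1 e^{\theta z}\,\rmd\theta$, which yields $\tau\vphi_1(i\tau\Delta) = \int_0^\tau e^{is\Delta}\,\rmd s$; combined with the change of variable $s \mapsto (k+1)\tau - s$ this gives
\begin{equation*}
\tau\,\Stau((n-1-k)\tau)\vphi_1(i\tau\Delta) = \int_{k\tau}^{(k+1)\tau}\Stau(n\tau - s)\,\rmd s.
\end{equation*}
Substituting this into \cref{eq:E2} and combining with the continuous integral yields
\begin{equation*}
\mathcal{E}_2^n = -i\int_0^{n\tau}\Stau(n\tau - s)\,\Pi_\tau\bigl[g(\Pi_\tau\psi(s)) - g(\Pi_\tau\psi(k_s\tau))\bigr]\,\rmd s,\qquad k_s := \lfloor s/\tau\rfloor.
\end{equation*}
Hence $\mathcal{E}_2$ is driven solely by the temporal variation of the forcing on a single step, requiring no spatial regularity of $V$.

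Next I split $g = g_1 + g_2$ via \cref{eq:g_def} and apply \cref{lem:dSct} to each piece with a different dual admissible pair. For the singular part $g_1$, I pair the target $(q,r)$ (with $q\neq 2$) against $(\tilde q, \tilde r) = (q_0, r_0)$ supplied by \cref{lem:pair1}. By \cref{lem:g1} and the $L^{r_0}$-boundedness of $\Pi_\tau$ in \cref{lem:Pi},
\begin{equation*}
\|g_1(\Pi_\tau\psi(s)) - g_1(\Pi_\tau\psi(k_s\tau))\|_{L^{r_0'}} \lesssim \|V_1\|_{L^{p_0}}\,\|\psi(s) - \psi(k_s\tau)\|_{L^{r_0}}.
\end{equation*}
Writing $\psi(s) - \psi(k_s\tau) = \int_{k_s\tau}^s \partial_t\psi(u)\,\rmd u$ and using Jensen plus Fubini on $[0,T]$ converts the $L^{q_0'}([0,T];L^{r_0'})$ norm of the difference above into a quantity controlled by $\tau\,\|\partial_t\psi\|_{L^{q_0'}([0,T];L^{r_0})} \lesssim T^{1/q_0' - 1/q_0}\,\tau\,M$, since $(q_0, r_0) \in \mathcal{I}$. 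For the regular part $g_2$ I pair against $(\tilde q, \tilde r) = (\infty, 2)$: \cref{lem:g2} together with the uniform bound $\|\Pi_\tau\psi\|_{L^\infty} \lesssim \|\psi\|_{H^2} \leq M$ and $\|\psi(s) - \psi(k_s\tau)\|_{L^2} \leq \tau\,M$ yields a pointwise-in-$s$ bound of order $C(V, M)\tau$, hence an $O(\tau)$ bound in $L^1([0,T]; L^2)$. Summing both contributions gives the claim.

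The main obstacle is the balancing of exponents in the $g_1$ estimate: the H\"older step in \cref{lem:g1} forces the spatial dual exponent to be $r_0 = 2p_0/(p_0 - 1)$, and the admissibility relation \cref{eq:admissible} then pins the temporal exponent to $q_0 = 4p_0/d$. This is exactly why the pair $(q_0, r_0)$ of \cref{lem:pair1} is included in $\mathcal{I}$ so that $\|\partial_t\psi\|_{L^{q_0}([0,T];L^{r_0})}$ is finite (by the Strichartz theory underlying the $H^2$ well-posedness in \cite[Theorem 4.8.1]{cazenave2003}) and bounded by $M$; correspondingly, the hypothesis $q \neq 2$ on the target admissible pair is precisely what is needed to avoid the excluded endpoint $(q,\tilde q) = (2,2)$ of \cref{lem:dSc}.
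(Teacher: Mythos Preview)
Your proof is correct and follows essentially the same approach as the paper: rewrite the discrete sum via the identity $\tau\vphi_1(i\tau\Delta)=\int_0^\tau e^{is\Delta}\,\rmd s$ so that $\mathcal{E}_2^n$ becomes a single integral of $\Stau(n\tau-s)$ against the one-step temporal increment $\Pi_\tau[g(\Pi_\tau\psi(s))-g(\Pi_\tau\psi(k_s\tau))]$, then split $g=g_1+g_2$ and apply \cref{lem:dSct} with dual pairs $(q_0,r_0)$ and $(\infty,2)$ respectively, extracting the factor $\tau$ from $\psi(s)-\psi(k_s\tau)=\int_{k_s\tau}^s\partial_t\psi$ via the available regularity $\partial_t\psi\in L^{q_0}([0,T];L^{r_0})\cap L^\infty([0,T];L^2)$. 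The paper spells out the H\"older/Fubini chain for the $g_1$ piece in full and reaches the identical bound $\tau\,\|V_1\|_{L^{p_0}}\|\partial_t\psi\|_{L^{q_0'}([0,T];L^{r_0})}$.
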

	
	\begin{proof}
		Note that
		\begin{equation}
			\tau \vphi_1(i \tau \Delta) = \int_0^\tau e^{i(\tau - s)\Delta} \rmd s, 
		\end{equation}
		which implies, by \cref{eq:Pitau_property}, 
		\begin{align}\label{sum-int}
			&\tau \Stau((n-1)\tau - k\tau) \vphi_1(i \tau \Delta) \Pi_\tau g(\Pi_\tau \psi(k \tau)) \notag \\
			&= \Stau((n-1)\tau - k\tau) \int_0^\tau e^{i(\tau - s)\Delta} \rmd s \Pi_\tau g(\Pi_\tau \psi(k \tau)) \notag \\
			&= \int_0^\tau \Stau(n\tau - k\tau - s) \rmd s  \Pi_\tau g(\Pi_\tau \psi(k \tau)) \notag\\
			&= \int_{k\tau}^{(k+1)\tau} \Stau(n\tau - s)  \Pi_\tau g(\Pi_\tau \psi(k \tau)) \rmd s .
			%&= \int_{0}^{1} \tau \Stau(n\tau - k \tau - \theta\tau)  \Pi_\tau (V\psi)(k \tau) \rmd \theta. 
		\end{align}
		Substituting \cref{sum-int} into \cref{eq:E2}, we obtain
		\begin{align*}
			\mathcal{E}^n_2
			&= -i \sum_{k=0}^{n-1} \int_{k\tau}^{(k+1)\tau} \Stau(n\tau - s) \Pi_\tau g(\Pi_\tau \psi(s)) \rmd s \\
			&\quad + i \sum_{k=0}^{n-1} \int_{k\tau}^{(k+1)\tau} \Stau(n\tau - s)  \Pi_\tau g(\Pi_\tau \psi(k \tau)) \rmd s \\
			&= -i \sum_{k=0}^{n-1} \int_{k\tau}^{(k+1)\tau} \Stau(n\tau - s) \Pi_\tau [g (\Pi_\tau \psi(s)) - g (\Pi_\tau \psi(k \tau))] \rmd s. 
		\end{align*}
		%		In addition, 
		%		\begin{align}
			%			&- i \int_0^{n\tau} S_\tau(n\tau - s) \Pi_\tau(V \psi)(s) \rmd s \notag \\ 
			%			&= - i \sum_{k=0}^{n-1} \int_{k\tau}^{(k+1)\tau} S_\tau(n\tau - s) \Pi_\tau(V \psi)(s) \rmd s \notag \\
			%			&= - i \sum_{k=0}^{n-1} \int_{0}^{\tau} S_\tau(n\tau - k \tau - s) \Pi_\tau(V \psi)(k \tau + s) \rmd s \notag \\
			%			&= - i  \int_{0}^{1} \tau \sum_{k=0}^{n-1} S_\tau(n\tau - k \tau - \theta \tau ) \Pi_\tau(V \psi)(k \tau + \theta \tau) \rmd \theta
			%		\end{align}
		%		Then we have
		%		\begin{align*}
			%			\mathcal{E}^n_1 
			%			&= - i  \int_{0}^{1} \tau \sum_{k=0}^{n-1} S_\tau(n\tau - k \tau - \theta \tau ) \Pi_\tau(V \psi)(k \tau + \theta \tau) \rmd \theta \\
			%			&\quad + i \int_{0}^{1} \tau \sum_{k=0}^{n-1} \Stau(n\tau - k \tau - \theta\tau)  \Pi_\tau (V\psi)(k \tau) \rmd \theta \\
			%			&= -i \int_{0}^{1} \tau \sum_{k=0}^{n-1} S_\tau(n\tau - k \tau - \theta \tau ) \Pi_\tau [(V \psi)(k \tau + \theta \tau) - (V \psi)(k \tau )] \rmd \theta
			%		\end{align*}
		%		Let
		%		\begin{equation}
			%			w_k = (V \psi)(k \tau + \theta \tau) - (V \psi)(k \tau ), \quad 0 \leq k \leq n-1. 
			%		\end{equation}
		%				It follows that 
		%		\begin{align*}
			%		\| \mathcal{E}^n_1 \|_{l^q(0, n\tau; L^r)} \lesssim \| w \|_{l^{\widetilde q'}(0, n\tau; L^{\widetilde r'})}. 
			%		\end{align*}
		For $j=1, 2$, define, for $0 \leq k \leq n-1$,  
		\begin{equation}
			w_j(s) = \Pi_\tau [g_j(\Pi_\tau \psi(s)) - g_j(\Pi_\tau \psi(k \tau))], \quad  k\tau< s \leq (k+1)\tau, 
		\end{equation}
		and set $w_j(s) = 0 $ when $s > T$. 
		%		Further define
		%		\begin{equation*}
			%			w(s) = w_1(s) + w_2(s) + w_3(s), \quad s \in \R, 
			%		\end{equation*}
		Then we have, for $1 \leq n \leq T/\tau$, 
		\begin{equation}
			\mathcal{E}_{2}^n = -i \sum_{k=0}^{n-1} \int_{k\tau}^{(k+1)\tau} \Stau(n\tau - s) (w_1(s) + w_2(s)) \rmd s = -i (\mathcal{E}_{2,1}^n + \mathcal{E}_{2,2}^n),  
		\end{equation}
		where
		\begin{equation}\label{E2}
			\mathcal{E}_{2,j}^n := -i \sum_{k=0}^{n-1} \int_{k\tau}^{(k+1)\tau} \Stau(n\tau - s) w_j(s) \rmd s = -i \int_0^{n\tau} S_\tau(n\tau - s) w_j(s) \rmd s. 
		\end{equation}
		%		Similarly, set $\mathcal{E}^n_2 = 0$ and $\mathcal{E}^n_{2, j} = 0$ for $n \leq 0$ and $n > T/\tau$. 
		%		Then, we define $(q_j, r_j)$ for $j=1, 2$ as
		%		\begin{equation}
			%			(q_1, r_1) = (q_0, r_0), \quad (q_2, r_2) = (\infty, 2). 
			%		\end{equation}
		For any admissible pairs $(q, r)$ and $(\tilde {q}, \tilde{r})$ with $(q, \tilde{q}) \neq (2, 2)$, by \cref{lem:dSct}, we have
		\begin{equation}\label{E2j}
			\| \mathcal{E}_{2, j} \|_{l^q_\tau([0, T]; L^r)} \lesssim \| w_j \|_{L^{\tilde q'}([0, T]; L^{\tilde r'})}, \quad j = 1, 2. 
		\end{equation}
		For $j=1$, choosing $(\tilde {q}, \tilde{r}) = (q_0, r_0)$ in \cref{E2j}, we have, with $N := T/\tau$, 
		\begin{align}\label{eq:w1_est}
			\displaybreak[2]
			&\| \mathcal{E}_{2, 1} \|_{l^q_\tau([0, T]; L^r)} \lesssim \| w_1 \|_{L^{q_0'}([0, T]; L^{r_0'})} = \left( \int_0^{T} \| w_1(s) \|_{L^{r_0'}}^{q_0'} \rmd s \right)^{\frac{1}{q_0'}} \notag \\
			&= \left( \sum_{k=0}^{N-1} \int_{k\tau}^{(k+1)\tau} \| \Pi_\tau [g_1(\Pi_\tau \psi(s)) - g_1(\Pi_\tau \psi(k \tau))] \|_{L^{r_0'}}^{q_0'} \rmd s \right)^{\frac{1}{q_0'}} \notag \\
			%			&\lesssim \left( \sum_{k=0}^{n-1} \int_{k\tau}^{(k+1)\tau} \| (V_1 \Pi_\tau \psi)(s) - (V_1 \Pi_\tau \psi)(k \tau) \|_{L^{r_0'}}^{q_0'} \rmd s \right)^{\frac{1}{q_0'}} \notag \\
			%			&\lesssim \| V_1 \|_{L^{p_0}} \left( \sum_{k=0}^{n-1} \int_{k\tau}^{(k+1)\tau} \| \Pi_\tau \psi(s) - \Pi_\tau \psi(k \tau) \|_{L^{ r_0}}^{q_0'} \rmd s \right)^{\frac{1}{q_0'}}  \notag \\
			&\lesssim \| V_1 \|_{L^{p_0}} \left( \sum_{k=0}^{N-1} \int_{k\tau}^{(k+1)\tau} \| \psi(s) - \psi(k \tau) \|_{L^{ r_0}}^{q_0'} \rmd s \right)^{\frac{1}{q_0'}} \text{(\cref{eq:g1-1,eq:Pi-1})} \notag \\
			&= \| V_1 \|_{L^{p_0}} \left( \sum_{k=0}^{N-1} \int_{k\tau}^{(k+1)\tau} \left \| \int_{k\tau}^s \partial_t \psi(t) \rmd t \right \|_{L^{ r_0}}^{q_0'} \rmd s \right)^{\frac{1}{q_0'}} \notag \\
			&\leq \| V_1 \|_{L^{p_0}} \left( \sum_{k=0}^{N-1} \int_{k\tau}^{(k+1)\tau} \left ( \int_{k\tau}^s \| \partial_t \psi(t) \|_{L^{r_0}} \rmd t \right )^{q_0'} \rmd s \right)^{\frac{1}{q_0'}} \notag \\
			&\leq \| V_1 \|_{L^{p_0}} \left( \sum_{k=0}^{N-1} \int_{k\tau}^{(k+1)\tau} \int_{k\tau}^s \| \partial_t \psi(t) \|_{L^{r_0}}^{q_0'} \rmd t (s-k\tau)^{\frac{q_0'}{q_0}}  \rmd s \right)^{\frac{1}{q_0'}} \notag \\
			&\leq \tau^\frac{1}{q_0} \| V_1 \|_{L^{p_0}} \left( \sum_{k=0}^{N-1} \int_{k\tau}^{(k+1)\tau} \int_{k\tau}^s \| \partial_t \psi(t) \|_{L^{r_0}}^{q_0'} \rmd t \rmd s \right)^{\frac{1}{q_0'}} \notag \\
			&= \tau^\frac{1}{q_0} \| V_1 \|_{L^{p_0}} \left( \sum_{k=0}^{N-1} \int_{k\tau}^{(k+1)\tau} ((k+1)\tau-t) \| \partial_t \psi(t) \|_{L^{r_0}}^{q_0'} \rmd t \right)^{\frac{1}{q_0'}} \text{(change order)}\notag \\
			&\leq \tau^{\frac{1}{q_0} + \frac{1}{q_0'}} \| V_1 \|_{L^{p_0}} \left( \sum_{k=0}^{N-1} \int_{k\tau}^{(k+1)\tau} \| \partial_t \psi(t) \|_{L^{r_0}}^{q_0'} \rmd t \right)^{\frac{1}{q_0'}} \notag \\
			&=\tau  \| V_1 \|_{L^{p_0}} \| \partial_t \psi \|_{L^{q_0'}([0, T]; L^{r_0})} \leq \tau T^{1 - \frac{2}{q_0}} \| V_1 \|_{L^{p_0}} \| \partial_t \psi \|_{L^{q_0}([0, T]; L^{r_0})}. 
		\end{align}
		For $j=2$, choosing $(\tilde {q}, \tilde{r}) = (\infty, 2)$ in \cref{E2j}, following the same procedure with \cref{lem:g2}, we have
		\begin{align}\label{eq:w2_est}
			\displaybreak[2]
			\| \mathcal{E}_{2, 2} \|_{l^q_\tau([0, T]; L^r)} &\lesssim \| w_2 \|_{L^{1}([0, T]; L^2)} \notag \\
			&\leq \sum_{k=0}^{N-1} \int_{k\tau}^{(k+1)\tau} \| g_2( \Pi_\tau \psi(s)) - g_2(\Pi_\tau \psi(k \tau)) \|_{L^{2}} \rmd s \notag \\
			&\leq C(M, \| V_2 \|_{L^\infty}) \sum_{k=0}^{N-1} \int_{k\tau}^{(k+1)\tau} \| \psi(s) -  \psi(k \tau) \|_{L^{2}} \rmd s \notag \\
			%			&\leq \| V_2 \|_{L^\infty} \sum_{k=0}^{n-1} \int_{k\tau}^{(k+1)\tau} \| \psi(s) - \psi(k \tau) \|_{L^{ 2}} \rmd s \notag \\
			&= C(M, \| V_2 \|_{L^\infty}) \sum_{k=0}^{N-1} \int_{k\tau}^{(k+1)\tau} \left \| \int_{k\tau}^s \partial_t \psi(t) \rmd t \right \|_{L^{2}} \rmd s \notag \\
			&\leq C(M, \| V_2 \|_{L^\infty}) \sum_{k=0}^{N-1} \int_{k\tau}^{(k+1)\tau} \int_{k\tau}^s \| \partial_t \psi(t) \|_{L^{2}} \rmd t \rmd s \notag \\
			&= C(M, \| V_2 \|_{L^\infty}) \sum_{k=0}^{N-1} \int_{k\tau}^{(k+1)\tau} ((k+1)\tau-t) \| \partial_t \psi(t) \|_{L^{2}} \rmd t \notag \\
			&\leq \tau C(M, \| V_2 \|_{L^\infty}) \| \partial_t \psi \|_{L^1([0, T]; L^{2})} \notag \\
			&\leq \tau T C(M, \| V_2 \|_{L^\infty}) \| \partial_t \psi \|_{L^\infty([0, T]; L^{2})}. 
		\end{align}
		The combination of \cref{eq:w1_est,eq:w2_est} yields from \cref{E2}
		\begin{equation*}
			\| \mathcal{E}_{2} \|_{l^q_\tau([0, T]; L^r)} \leq \| \mathcal{E}_{2, 1} \|_{l^q_\tau([0, T]; L^r)} + \| \mathcal{E}_{2, 2} \|_{l^q_\tau([0, T]; L^r)} \leq C(M, T, V) \tau, 
		\end{equation*}
		which completes the proof. 
	\end{proof}
	As is shown in the proof above, the local truncation error of the EWI essentially requires applying one time derivative of $g(\psi) = V \psi + \beta|\psi|^{2\sigma}\psi$, allowing for optimal first-order error in $L^2$-norm even though the potential is singular in space. 
	
	Then we estimate $\mathcal{E}^n_{1}$ \cref{eq:E1}, which is the error of the frequency truncation in the EWI \cref{eq:EWI_scheme}. We need an additional admissible pair. 
	\begin{lemma}\label{lem:pair2}
		For any $ 2 \leq p_0 \leq 4$, by setting
		\begin{equation*}
			\tilde q_0 := \frac{4p_0}{d(4-p_0)} \ (\tilde q_0 := \infty \text{ if } p_0 = 4), 
		\end{equation*}
		we have $(\tilde q_0, \tilde r_0)$ is admissible with $\tilde r_0$ given in \cref{eq:r0} and $\tilde q_0 \neq 2$. 
	\end{lemma}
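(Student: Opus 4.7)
The claim is a purely algebraic verification of the admissibility condition \cref{eq:admissible} for the specified pair, so my plan is to substitute the definitions and check each clause of admissibility in turn.

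First I would verify the scaling relation $\frac{2}{\tilde q_0}=d\bigl(\frac{1}{2}-\frac{1}{\tilde r_0}\bigr)$. For $2<p_0\le 4$, I compute $\frac{1}{\tilde r_0}=\frac{p_0-2}{p_0}$, whence
\begin{equation*}
d\left(\frac{1}{2}-\frac{1}{\tilde r_0}\right)=d\cdot\frac{4-p_0}{2p_0}=\frac{d(4-p_0)}{2p_0}=\frac{2}{\tilde q_0},
\end{equation*}
matching the definition of $\tilde q_0$. The endpoint $p_0=2$ (where $\tilde r_0=\infty$) gives both sides $d/2$, and $p_0=4$ (where $\tilde q_0=\infty$) gives both sides $0$, so the identity extends to the closed interval under the stated conventions.

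Next I would check the range $\tilde q_0,\tilde r_0\in[2,\infty]$. For $\tilde r_0=p_0/(p_0-2)$, the map is decreasing on $(2,\infty)$ and hits $2$ at $p_0=4$, so $\tilde r_0\ge 2$ precisely when $p_0\le 4$, which is our hypothesis. For $\tilde q_0$, a direct inequality shows $\tilde q_0\ge 2\iff p_0\ge 4d/(d+2)$; since $4d/(d+2)\le 2$ for $d\in\{1,2\}$ and this lemma will only be invoked when $p_0$ satisfies \cref{eq:pd_def} in dimension three, the condition is met throughout the intended range.

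Finally I would rule out the two forbidden configurations. The exclusion $(\tilde q_0,\tilde r_0,d)\ne(2,\infty,2)$ and the stronger requirement $\tilde q_0\ne 2$ both reduce to $p_0\ne 4d/(d+2)$; under \cref{eq:pd_def} this value is either below the allowed range ($d=1$) or at/below its boundary ($d=2,3$), and the strict inequalities in \cref{eq:pd_def} for $d=2,3$ keep $\tilde q_0$ bounded away from $2$. There is no real obstacle here beyond bookkeeping of endpoint conventions; the only point requiring care is checking that the $p_0=2$ case, where $\tilde r_0=\infty$, does not collide with the forbidden pair $(2,\infty,2)$ in dimension two, which is precisely why \cref{eq:pd_def} requires $p>2$ for $d=2$.
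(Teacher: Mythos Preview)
Your proposal is correct and, since the paper states the lemma without proof as a routine verification, your direct algebraic check is exactly the intended approach. You also rightly observe that the bare hypothesis $2\le p_0\le 4$ is insufficient in dimensions $d=2,3$ (e.g.\ $d=2$, $p_0=2$ gives the forbidden endpoint $(2,\infty,2)$, and $d=3$, $p_0<12/5$ gives $\tilde q_0<2$); your appeal to \cref{eq:pd_def} supplies precisely the implicit restriction under which the paper actually invokes this lemma in the proof of \cref{prop:E1}.
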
	
	
	\begin{proposition}\label{prop:E1}
		For any admissible pairs $(q, r)$ with $q \neq 2$,  we have
		\begin{equation*}
			\| \mathcal{E}_1 \|_{l^q_\tau([0, T]; L^r)} \leq C(M, V, T) \tau^\gamma , 
		\end{equation*}
		where
		\begin{equation}\label{eq:gamma}
			\gamma = \left\{
			\begin{aligned}
				&1, && p_0 \geq 2, \ d=1, \\
				&1, && p_0 > 2, \ d=2, \\
                &1^{-}, && p_0=2, \  d=2, \\
				&1, && p_0 > \frac{12}{5}, \ d=3, \\
				&{\frac{3}{4}}^{-} + 3\left( \frac{1}{2} - \frac{1}{p_0}  \right), && 2 \leq p_0 \leq \frac{12}{5}, \  d=3. 
			\end{aligned}
			\right.
		\end{equation}
	\end{proposition}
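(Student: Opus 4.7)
The plan is to split $\mathcal{E}_1 = \mathcal{E}_{1,1} + \mathcal{E}_{1,2}$ along the decomposition $g = g_1 + g_2$ from \cref{eq:g_def}, and then apply the dual discrete Strichartz estimate \cref{lem:dSct} to each piece with a carefully chosen admissible pair $(\tilde{q}, \tilde{r})$; the condition $(q, \tilde{q}) \neq (2, 2)$ in \cref{lem:dSct} is automatic from the assumption $q \neq 2$.

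For $\mathcal{E}_{1,2}$, which carries the bounded potential and the nonlinearity, I would take the trivial dual pair $(\tilde{q}, \tilde{r}) = (\infty, 2)$. The Lipschitz bound of \cref{lem:g2}, combined with $\|\Pi_\tau \psi\|_{L^\infty} \lesssim \|\psi\|_{L^\infty} \leq M$ (via \cref{eq:Pi-1} and the embedding $H^2 \hookrightarrow L^\infty$ in the relevant dimensions), reduces the integrand to $\|\psi(s) - \Pi_\tau \psi(s)\|_{L^2}$, which by \cref{eq:Pi-2} with $\gamma = 1$ is $\lesssim \tau \|\Delta \psi(s)\|_{L^2} \lesssim M \tau$; integrating over $[0,T]$ then yields $\|\mathcal{E}_{1,2}\|_{l^q_\tau([0,T]; L^r)} \lesssim M T \tau$, i.e.\ a first-order bound independent of $p_0$ and $d$.

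For $\mathcal{E}_{1,1}$ I would apply H\"older in the form \cref{eq:g1-2},
\begin{equation*}
\|V_1 (\psi(s) - \Pi_\tau \psi(s))\|_{L^{\tilde{r}'}} \leq \|V_1\|_{L^{p_0}} \|\psi(s) - \Pi_\tau \psi(s)\|_{L^s}, \qquad \frac{1}{s} = \frac{1}{\tilde{r}'} - \frac{1}{p_0},
\end{equation*}
and then optimize $(\tilde{q}, \tilde{r})$ by cases. In the non-critical regime where the pair $(\tilde{q}_0, \tilde{r}_0)$ from \cref{lem:pair2} is admissible with $\tilde{q}_0 > 2$, namely $d=1$ with $p_0 \in [2,4]$, $d=2$ with $p_0 > 2$, and $d=3$ with $p_0 > 12/5$, I would choose $(\tilde{q}, \tilde{r}) = (\tilde{q}_0, \tilde{r}_0)$, so that $s = p_0$; then \cref{eq:Pi-2} with $\gamma = 1$ replaces the $L^{p_0}$ factor by $\tau \|\Delta \psi(s)\|_{L^{p_0}}$, and one uses the NLSE \cref{NLSE} in the form $\Delta \psi = -i \partial_t \psi + V \psi + \beta |\psi|^{2\sigma} \psi$ together with $V \in L^{p_0} + L^\infty$, $\psi \in L^\infty$, and the Strichartz regularity $\partial_t \psi \in L^q([0,T]; L^{p_0})$ for an admissible pair $(q, p_0) \in \mathcal{I}$ to obtain $\|\Delta \psi\|_{L^{\tilde{q}_0'}([0,T]; L^{p_0})} \lesssim T^\alpha M$ for some $\alpha \geq 0$, giving $\gamma = 1$.

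For the critical cases $(d, p_0) = (2, 2)$ and $d = 3$ with $2 \leq p_0 \leq 12/5$, the pair $(\tilde{q}_0, \tilde{r}_0)$ is either the forbidden 2D endpoint $(2, \infty)$ or satisfies $\tilde{q}_0 \leq 2$ in 3D; there I would instead pick an admissible $(\tilde{q}, \tilde{r})$ with $\tilde{q}$ slightly above $2$ (so $\tilde{r} < \infty$ in 2D, $\tilde{r} = 6^-$ in 3D), which forces $s > p_0$ and makes $\|\Delta \psi\|_{L^s}$ unavailable, and fall back on \cref{eq:Pi-2} combined with the Sobolev embedding $H^{2 - 2\gamma}(\R^d) \hookrightarrow L^s(\R^d)$ (i.e.\ $\gamma \leq 1 - \tfrac{d}{2}(\tfrac{1}{2} - \tfrac{1}{s})$). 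In 3D this produces $\gamma = \tfrac{3}{2} - \tfrac{3}{2 p_0} - \epsilon = {\tfrac{3}{4}}^{-} + 3(\tfrac{1}{2} - \tfrac{1}{p_0})$, while in 2D at $p_0 = 2$ the same computation gives $\gamma = 1^-$. The main obstacle is exactly the case-by-case balancing of Strichartz admissibility, H\"older compatibility via $1/\tilde{r}' = 1/p_0 + 1/s$, and the regularity of $\psi$ on the H\"older-dictated space $L^s$: the non-critical cases rely decisively on the NLSE-plus-Strichartz bootstrap to upgrade $\|\Delta \psi\|_{L^2}$ to $\|\Delta \psi\|_{L^{p_0}}$, whereas the critical cases must settle for the weaker Sobolev-based filter bound and incur an unavoidable $\epsilon$ loss from sitting at the admissibility endpoint.
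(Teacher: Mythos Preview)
Your overall strategy matches the paper's proof: the same splitting $\mathcal{E}_1 = \mathcal{E}_{1,1} + \mathcal{E}_{1,2}$, the same treatment of $\mathcal{E}_{1,2}$ via the trivial pair $(\infty,2)$, and the same use of $(\tilde q_0,\tilde r_0)$ plus the NLSE bootstrap on $\|\Delta\psi\|_{L^{p_0}}$ in the non-critical regime. The $d=2$, $p_0=2$ endpoint and the $d=3$, $p_0=2$ endpoint are also handled correctly.

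There is, however, a genuine gap in the $d=3$, $2 < p_0 \le \tfrac{12}{5}$ range. Your $L^2$-based Sobolev embedding $H^{2-2\gamma}(\R^3)\hookrightarrow L^s(\R^3)$ with $\tfrac1s = \tfrac1{\tilde r'}-\tfrac1{p_0}$ and $\tilde r \to 6^-$ yields
\[
\gamma \;=\; 1 - \tfrac{3}{2}\bigl(\tfrac12-\tfrac1s\bigr)\;=\;\tfrac{3}{2}-\tfrac{3}{2p_0}-\vep,
\]
which is \emph{not} equal to $\tfrac{3}{4}+3(\tfrac12-\tfrac1{p_0})=\tfrac{9}{4}-\tfrac{3}{p_0}$ except at $p_0=2$; for instance at $p_0=\tfrac{12}{5}$ your value is $\tfrac{7}{8}$ while the proposition requires $1^-$. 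The discrepancy is real, not just arithmetic: by passing through $H^2$ you only exploit $\Delta\psi\in L^2$ and discard the extra integrability of $V_1\in L^{p_0}$ with $p_0>2$.

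The paper closes this gap by staying in $L^{p_0}$-based spaces also in the critical regime. After \cref{eq:Pi-2} in $L^{r_1}$ one uses the Bessel-potential embedding $H^{2s_1,p_0}(\R^3)\hookrightarrow L^{r_1}(\R^3)$ (not the $L^2$-based one), and then bounds $\|(I-\Delta)\psi\|_{L^{\tilde q'}([0,T];L^{p_0})}$ via the same NLSE-plus-Strichartz bootstrap you already used in the non-critical case. In other words, the bootstrap on $\|\Delta\psi\|_{L^{p_0}}$ is needed in \emph{both} regimes in 3D, not only the non-critical one; combining it with the $L^{p_0}$-based Sobolev step is what produces the correct exponent $\tfrac{9}{4}-\tfrac{3}{p_0}-\vep$.
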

	
	\begin{proof}
		Recalling \cref{eq:E1,eq:g_def}, we have
		\begin{equation}\label{eq:E1_decom}
			\mathcal{E}^n_1 = \mathcal{E}^n_{1, 1} + \mathcal{E}^n_{1, 2},  
		\end{equation}
		where
		\begin{equation}
			\mathcal{E}^n_{1, j} = - i \int_0^{n\tau} \Stau(n \tau - s) \Pi_\tau(g_j(\psi(s)) - g_j(\Pi_\tau \psi(s))) \rmd s, \quad j=1, 2. 
		\end{equation}
		We start with the estimate of $\mathcal{E}^n_{1, 2}$. Using \cref{lem:dSct} with $(\tilde{q}, \tilde{r}) = (\infty, 2)$, by \cref{lem:Pi,lem:g2}, and H\"older's inequality, we have
		\begin{align}\label{eq:E12_est}
			\| \mathcal{E}_{1, 2} \|_{l^q_\tau([0, T]; L^r)} 
			&\lesssim \| \Pi_\tau(g_2(\psi) - g_2(\Pi_\tau \psi)) \|_{L^{1}([0, T]; L^{2})} \notag \\
			&\leq C(M, \| V_2 \|_{L^\infty}) \| \psi - \Pi_\tau \psi \|_{L^1([0, T]; L^2)} \notag \\
			&\leq C(M, \| V_2 \|_{L^\infty}) \tau \| \Delta \psi \|_{L^1([0, T]; L^2)} \notag \\
			&\leq C(M, \| V_2 \|_{L^\infty}) T\tau \| \psi \|_{L^\infty([0, T]; H^2)} \lesssim \tau. 
		\end{align}
		Then we consider $\mathcal{E}^n_{1, 1}$. We first present the estimates for optimal convergence, i.e. we consider $p_0$ such that $\gamma = 1$ in \cref{eq:gamma}. In fact, we can assume that $p_0 \leq 4$. Recalling \cref{lem:pair2}, applying \cref{lem:dSct} with $(\tilde q, \tilde r) = (\tilde q_0, \tilde r_0)$ to $\mathcal{E}^n_{1, 1}$, by \cref{eq:g1-2}, \cref{lem:Pi}, and H\"older's inequality, we have
		\begin{align}\label{eq:E11_est}
			\| \mathcal{E}_{1, 1} \|_{l^q_\tau([0, T]; L^r)}
			&\lesssim \| \Pi_\tau(g_1(\psi) - g_1(\Pi_\tau \psi)) \|_{L^{\tilde q_0'}([0, T]; L^{\tilde r_0'})} \notag \\
			&\leq \|V_1(I - \Pi_\tau) \psi \|_{L^{\tilde q_0'}([0, T]; L^{\tilde r_0'})} \notag \\
			&\leq \| V_1 \|_{L^{p_0}} \|  (I - \Pi_\tau)\psi \|_{L^{\tilde q_0'}([0, T]; L^{p_0})} \notag \\
			&\lesssim \tau T^{1-\frac{1}{\tilde q_0} - \frac{1}{q_1}} \| V_1 \|_{L^{p_0}} \| \Delta \psi \|_{L^{q_1}([0, T]; L^{p_0})}, 
		\end{align}
		where $q_1 = \frac{2p_0}{d(p_0 - 2)} \ (q_1 = \infty \text{ if } p_0 = 2)$ is such that $(q_1, p_0)$ is admissible. For $\| \Delta \psi \|_{L^{q_1}([0, T]; L^{p_0})}$, by the NLSE \cref{NLSE}, we have
		\begin{align}\label{eq:Deltapsiest}
			&\| \Delta \psi \|_{L^{q_1}([0, T]; L^{p_0})} \notag \\
			&\leq \| \partial_t \psi \|_{L^{q_1}([0, T]; L^{p_0})} + \| V_1 \psi \|_{L^{q_1}([0, T]; L^{p_0})} + \| (V_2 + \beta |\psi|^2)\psi \|_{L^{q_1}([0, T]; L^{p_0})} \notag \\
			&\leq M + \| V_1 \|_{L^{p_0}} \| \psi \|_{L^{q_1}([0, T]; L^\infty)} + \| V_2 + \beta |\psi|^2 \|_{L^\infty} \| \psi \|_{L^{q_1}([0, T]; L^{p_0})} \notag \\
			&\leq M + \| V_1 \|_{L^{p_0}} T^{\frac{1}{q_1}} \| \psi \|_{L^{\infty}([0, T]; L^\infty)} + C(M, \| V_2 \|_{L^\infty}) T^\frac{1}{q_1} \| \psi \|_{L^\infty([0, T]; H^2)} \notag \\
			&\leq C(M, T, \| V_1 \|_{L^{p_0}}, \| V_2 \|_{L^{\infty}}), 
		\end{align}
		which, plugged into \cref{eq:E11_est}, yields
		\begin{equation}\label{eq:E11_rst1}
			\| \mathcal{E}_{1, 1} \|_{l^q_\tau([0, T]; L^r)} \lesssim \tau. 
		\end{equation}
		
		Next, we estimate $\mathcal{E}^n_{1, 1}$ for $p_0$ such that $\gamma<1$ in \cref{eq:gamma}. There is no such $p_0$ in 1D and thus we only consider the 2D and 3D cases. Applying \cref{lem:dSct} with any admissible pair $(\tilde q, \tilde r) $ such that $\tilde q \neq 2$ to $\mathcal{E}^n_{1, 1}$, using \cref{eq:Vphi_general}, \cref{lem:Pi}, and H\"older's inequality, we have
		\begin{align}\label{eq:E11_est2}
			\| \mathcal{E}_{1, 1} \|_{l^q_\tau([0, T]; L^r)}
			&\lesssim \| \Pi_\tau(g_1(\psi) - g_1(\Pi_\tau \psi)) \|_{L^{\tilde q'}([0, T]; L^{\tilde r'})} \notag \\
			&\leq \|V_1(I - \Pi_\tau) \psi \|_{L^{\tilde q'}([0, T]; L^{\tilde r'})} \notag \\
			&\leq \| V_1 \|_{L^{p_0}} \|  (I - \Pi_\tau)\psi 	\|_{L^{\tilde q'}([0, T]; L^{r_1})} \notag \\
			&\lesssim \tau^\gamma \| V_1 \|_{L^{p_0}} \| (-\Delta)^\gamma \psi \|_{L^{\tilde q'}([0, T]; L^{r_1})}, 
		\end{align}
		where $1 \leq r_1 \leq \infty$ satisfies
		\begin{equation}\label{eq:relation}
			\frac{1}{\tilde r} + \frac{1}{p_0} + \frac{1}{r_1} = 1. 
		\end{equation}
		When $d=2$ and $p_0 = 2$, for any $\vep > 0$ sufficiently small, by choosing in \cref{eq:E11_est2}
		\begin{equation}
			\tilde r = \frac{1}{\vep}, \quad r_1 = \tilde q = \frac{2}{1-2\vep}, \quad \gamma = 1-\vep, 
		\end{equation}
		we have, by H\"older's inequality and Sobolev embedding $H^{2\vep}(\R^2) \hookrightarrow L^{r_1}(\R^2)$, 
		\begin{align}\label{eq:E11_rst2}
			\| \mathcal{E}_{1, 1} \|_{l^q_\tau([0, T]; L^r)} 
			&\lesssim \tau^{1-\vep} \| V_1 \|_{L^{p_0}} \| (-\Delta)^{1-\vep} \psi \|_{L^{\tilde q'}([0, T]; H^{2\vep})} \notag \\
			&\lesssim \tau^{1-\vep} \| (I-\Delta) \psi \|_{L^{\tilde q'}([0, T]; L^2)} \notag \\
			&\leq \tau^{1-\vep} T^{1-\frac{1}{\tilde q}}  \| \psi \|_{L^{\infty}([0, T]; H^{2})} \lesssim \tau^{1-\vep}. 
		\end{align}
		When $d=3$ and $2 \leq p_0 \leq 12/5$, for any $\vep > 0$ sufficiently small, choosing in \cref{eq:E11_est2}
%		\begin{equation}
%			\begin{aligned}
%				&\tilde r = \frac{6}{1+4\vep}, \quad r_1 = \frac{6p_0}{(5-4\vep)p_0 - 6}, \quad \gamma = 1-s_1, \quad s_1 = \frac{3}{p_0} - \frac{5}{4} + \vep, \\
%				&\tilde q = \frac{2}{1-2\vep}, \quad q_2 = \frac{2p_0}{3 - (1-2\vep)p_0}, 
%			\end{aligned}
%		\end{equation}
		\begin{equation}
			\begin{aligned}
				&\tilde r = \frac{6}{1+4\vep}, \quad r_1 = \frac{6p_0}{(5-4\vep)p_0 - 6}, \quad \tilde q = \frac{2}{1-2\vep}, \\
				&\gamma = 1-s_1, \quad s_1 = \frac{3}{p_0} - \frac{5}{4} + \vep, \\
			\end{aligned}
		\end{equation}
		by the Sobolev embedding of Bessel potential spaces $ H^{2s_1, p_0}(\R^3)\hookrightarrow L^{r_1}(\R^3)$ \cite{cazenave2003}, and recalling \cref{eq:Deltapsiest}, we obtain
		\begin{align}\label{eq:E11_rst3}
			\| \mathcal{E}_{1, 1} \|_{l^q_\tau([0, T]; L^r)} 
			&\lesssim \tau^{1-s_1} \| V_1 \|_{L^{p_0}} \| (-\Delta)^{1-s_1} \psi \|_{L^{\tilde q'}([0, T]; H^{2s_1, p_0})} \notag \\
			&\lesssim \tau^{1-s_1} \| (I-\Delta) \psi \|_{L^{\tilde q'}([0, T]; L^{p_0})} \notag \\
			&\lesssim \tau^{1-s_1} T^{1 - \frac{1}{\tilde q} - \frac{1}{q_1}} \| (I-\Delta) \psi \|_{L^{q_1}([0, T]; L^{p_0})} \notag \\
			&\lesssim \tau^{1 - s_1} = \tau^{\frac{3}{4} + 3 (\frac{1}{2} - \frac{1}{p_0}) - \vep}.  
		\end{align}
		%		where
		%		\begin{equation}
			%			\tilde r = \frac{6}{1+\vep}, \quad r_1 = \frac{6p_0}{(5-\vep)p_0 - 6}, \quad s_1 = \frac{6}{p_0} - \frac{5}{2} + \frac{\vep}{2}. 
			%		\end{equation}
		%		 in \cref{eq:E11_est}. Choose $(\tilde q, r_1) = (\tilde q_0, \tilde r_0)$ in \cref{eq:E11_est}, noting \cref{eq:g1-2,lem:pair2}, we have
		%		\begin{equation}
			%			\| \mathcal{E}^n_{1, 1} \|_{l^q_\tau([0, T]; L^r)} \lesssim \tau \| \Delta \psi \|_{L^{\tilde q_0}([0, T]; L^{\tilde r_0})} \lesssim \tau. 
			%		\end{equation}
		%		When $d=2$ and $p_0>2$, set $(\tilde q, \tilde r) = $ 
		Combing \cref{eq:E12_est,eq:E11_rst1,eq:E11_rst2,eq:E11_rst3}, we obtain the desired result. 
	\end{proof}
	
	%	\begin{corollary}
		%		We have
		%		\begin{equation*}
			%			\| \mathcal{E}^n_1 \|_{l_\tau^q([0, T]; L^r)} \lesssim \tau^\gamma, \quad \gamma := \left\{
			%			\begin{aligned}
				%				&1, &&p_0 = \delta(d), \\
				%				&1^-, &&2 \leq p_0 < \delta(d), \quad d=1,2, \\
				%				&\frac{3}{4}, &&2 \leq p_0<\delta(d), \quad d=3. 
				%			\end{aligned}
			%			\right.
			%		\end{equation*}
		%	\end{corollary}	
	%	
	%	\begin{proof}
		%		When $p_0 = \delta(d)$, by choosing $(q, r) = (q, p_0)$, we have 
		%		\begin{align}
			%			\| (-\Delta) \psi \|_{L^{q}(0, T; L^{r})} 
			%			&\leq \| \partial_t \psi \|_{L^{q}(0, T; L^{r})} + \| V \psi \|_{L^{q}(0, T; L^{r})} \notag \\
			%			&\leq M + \| V \|_{L^r} \| \psi \|_{L^{q}(0, T; L^{\infty})} \lesssim M. 
			%		\end{align}
		%	\end{proof}
	
	Lastly, we estimate $Z$ in \cref{eq:Z_def}. By \cite[Lemma 11.1]{LRI_error_1}, we have the following. 
	\begin{lemma}\label{lem:phi_1_Pi}
		Let $0 < \tau \leq 1$. For any $ 1 \leq p \leq \infty$ and $f \in L^p(\R^d)$, we have 
		\begin{equation*}
			\| \vphi_1(i \tau \Delta) \Pi_\tau f \|_{L^p} \lesssim \| f \|_{L^p}. 
		\end{equation*}
	\end{lemma}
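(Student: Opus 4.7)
The plan is to recognize $\vphi_1(i \tau \Delta) \Pi_\tau$ as a Fourier multiplier whose convolution kernel has an $L^1$-norm independent of $\tau$, and then apply Young's convolution inequality. This reduces the Bernstein-type bound to a single scaling computation.

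First, I would write the operator as a Fourier multiplier with symbol
\begin{equation*}
m_\tau(\vxi) = \vphi_1(-i \tau |\vxi|^2)\, \chi(\tau^{1/2} \vxi), \qquad \vxi \in \R^d.
\end{equation*}
The key observation is that the rescaling $\vxi = \tau^{-1/2}\bm{\eta}$ collapses the $\tau$-dependence: setting $M(\bm{\eta}) := \vphi_1(-i |\bm{\eta}|^2)\, \chi(\bm{\eta})$, we have $m_\tau(\vxi) = M(\tau^{1/2}\vxi)$, where $M$ is completely independent of $\tau$. Because $\vphi_1(z) = (e^z - 1)/z$ has a removable singularity at $z=0$ (with $\vphi_1(0) = 1$) and extends to an entire function, the map $\bm{\eta} \mapsto \vphi_1(-i|\bm{\eta}|^2)$ lies in $C^\infty(\R^d)$; combined with $\chi \in C_c^\infty(\R^d)$ (supported in $B(\mathbf{0}, 2)$), this gives $M \in C_c^\infty(\R^d) \subset \mathcal{S}(\R^d)$.

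Next, let $K := \mathcal{F}^{-1}(M) \in \mathcal{S}(\R^d) \subset L^1(\R^d)$ and denote $C_\chi := \|K\|_{L^1}$, a finite constant depending only on $d$ and $\chi$. By the standard scaling of the Fourier transform, the convolution kernel of the multiplier $m_\tau(\vxi) = M(\tau^{1/2}\vxi)$ is
\begin{equation*}
K_\tau(\vx) = \tau^{-d/2}\, K(\tau^{-1/2}\vx), \qquad \text{so that} \qquad \| K_\tau \|_{L^1(\R^d)} = \| K \|_{L^1(\R^d)} = C_\chi.
\end{equation*}

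Finally, since $\vphi_1(i\tau\Delta)\Pi_\tau f = K_\tau * f$, Young's convolution inequality immediately yields
\begin{equation*}
\| \vphi_1(i\tau\Delta)\Pi_\tau f \|_{L^p} \leq \| K_\tau \|_{L^1}\, \| f \|_{L^p} = C_\chi\, \| f \|_{L^p}
\end{equation*}
uniformly in $0 < \tau \leq 1$ and in $1 \leq p \leq \infty$. The only non-routine point — and the one I would expect to be the main (very mild) obstacle — is verifying the smoothness of $M$ at the origin, since $\vphi_1$ has an apparent singularity at $z=0$ where $-i|\bm{\eta}|^2$ vanishes; this is resolved by the removable-singularity observation above. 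Note also that the hypothesis $\tau \leq 1$ is not truly used: the rescaling shows the bound is actually scale-invariant.
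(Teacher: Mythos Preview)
Your argument is correct: writing $\vphi_1(i\tau\Delta)\Pi_\tau$ as the Fourier multiplier $M(\tau^{1/2}\vxi)$ with $M \in C_c^\infty(\R^d)$, and then invoking the $L^1$-scale-invariance of the kernel together with Young's inequality, is exactly the standard route to such Bernstein-type bounds. The paper does not actually give its own proof of this lemma but simply cites \cite[Lemma~11.1]{LRI_error_1}; your proof is precisely the argument one expects to find there, so there is nothing to compare.
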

	
	%	Define a constant
	%	\begin{equation}
		%		M:= \max\left\{\| V_1 \|_{L^{p_0}}, \| V_2 \|_{L^{\infty}}, \| \psi \|_{W^{1, q_0}(0, T; L^{r_0})}, \| \psi \|_{L^\infty(0, T; H^2)}, \| \psi \|_{L^\infty(0, T; L^\infty)} \right\}. 
		%	\end{equation}
	
	\begin{proposition}\label{prop:Z}
		Let $I=[0, n\tau]$ for some $1 \leq n \leq T/\tau$. Assuming that $\sup_{0 \leq k \leq n-1} \| \psi^k_\tau \|_{L^\infty} \leq M_0$, we have, for any admissible pair $(q, r)$ with $q \neq 2$, 
		\begin{align*}
			\| Z \|_{l_\tau^q(I; L^r)} 
			&\leq (|I|^{1 - \frac{2}{q_0}} + |I|) (C_1 + C_2(M_0)) \| \Pi_\tau \psi - \psi_\tau \|_{X_\tau(I)}, 
		\end{align*}
		where $\| \cdot \|_{X_{\tau}(I)} := \| \cdot \|_{l_\tau^{q_0}(I; L^{r_0})} + \| \cdot \|_{l_\tau^\infty(I; L^2)}$, and $C_1$, $C_2$ also depend on $M$ and $V$. 
	\end{proposition}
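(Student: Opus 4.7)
The natural strategy is to decompose $Z = Z_1 + Z_2$ following the splitting $g = g_1 + g_2$ from \cref{eq:g_def,eq:g12_def}, and estimate each piece by the discrete inhomogeneous Strichartz estimate \cref{lem:dS2t}. The key structural observation is that $(n-1-k)\tau = (n-k+\theta)\tau$ with $\theta = -1 \in [-1,1]$, so each
\begin{equation*}
Z_j^n = -i\tau\sum_{k=0}^{n-1}\Stau((n-k+\theta)\tau)\,\vphi_1(i\tau\Delta)\Pi_\tau\bigl(g_j(\Pi_\tau\psi(k\tau)) - g_j(\psi_\tau^k)\bigr)
\end{equation*}
fits the template on the left-hand side of \cref{lem:dS2t}, applied to the source $f_j^k := \vphi_1(i\tau\Delta)\Pi_\tau\bigl(g_j(\Pi_\tau\psi(k\tau)) - g_j(\psi_\tau^k)\bigr)$. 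The amplifier $\vphi_1(i\tau\Delta)\Pi_\tau$ can then be discarded inside any $L^p$ norm of $f_j^k$ via \cref{lem:phi_1_Pi}.

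For $Z_1$, I would invoke \cref{lem:dS2t} with the admissible pair $(\tilde q,\tilde r) = (q_0, r_0)$ from \cref{lem:pair1} (valid since $q_0 \neq 2$), which yields
\begin{equation*}
\|Z_1\|_{l^q_\tau(I; L^r)} \lesssim \|g_1(\Pi_\tau\psi) - g_1(\psi_\tau)\|_{l^{q_0'}_\tau(I; L^{r_0'})}.
\end{equation*}
Applying the bilinear bound \cref{eq:g1-1} at each time level gives a pointwise estimate by $\|V_1\|_{L^{p_0}}\|\Pi_\tau\psi(k\tau) - \psi_\tau^k\|_{L^{r_0}}$; a subsequent Hölder inequality in the discrete time index (using $q_0' \leq q_0$) absorbs the mismatch between $l^{q_0'}_\tau$ and $l^{q_0}_\tau$ at the price of the factor $|I|^{1/q_0' - 1/q_0} = |I|^{1 - 2/q_0}$. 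This produces the first piece of the stated bound with $C_1$ depending on $\|V_1\|_{L^{p_0}}$.

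For $Z_2$, I would apply \cref{lem:dS2t} with the trivial pair $(\tilde q,\tilde r) = (\infty, 2)$, reducing matters to $\|g_2(\Pi_\tau\psi) - g_2(\psi_\tau)\|_{l^1_\tau(I; L^2)}$. The Lipschitz bound \cref{lem:g2}, combined with the uniform estimate $\|\Pi_\tau\psi(t)\|_{L^\infty} \lesssim M$ (which holds since $\Pi_\tau$ is convolution with a fixed Schwartz function and therefore bounded on $L^\infty$) and the standing assumption $\|\psi_\tau^k\|_{L^\infty} \leq M_0$, controls the summand by $C_2(M_0)\|\Pi_\tau\psi(k\tau) - \psi_\tau^k\|_{L^2}$; the $l^1_\tau$ sum then contributes a factor $|I|$ multiplied by $\|\Pi_\tau\psi - \psi_\tau\|_{l^\infty_\tau(I; L^2)}$. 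Summing the two contributions yields the claim.

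The only real subtlety is that the Strichartz machinery must be invoked after the factor $\vphi_1(i\tau\Delta)\Pi_\tau$ has been peeled off via \cref{lem:phi_1_Pi}; the rest is routine once the correct admissible pair is chosen for each piece. The choice $(q_0,r_0)$ is forced on $g_1$ by the mere $L^{p_0}$-integrability of $V_1$ (trading spatial integrability for the time weight $|I|^{1-2/q_0}$), whereas the standard $(\infty,2)$ pairing suffices for $g_2$ by virtue of its $L^2$-Lipschitz structure.
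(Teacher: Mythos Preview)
Your proposal is correct and follows essentially the same route as the paper: split $Z=Z_1+Z_2$ along $g=g_1+g_2$, apply the discrete Strichartz estimate \cref{lem:dS2t} with $(\tilde q,\tilde r)=(q_0,r_0)$ for $Z_1$ and $(\tilde q,\tilde r)=(\infty,2)$ for $Z_2$, strip off $\vphi_1(i\tau\Delta)\Pi_\tau$ via \cref{lem:phi_1_Pi}, invoke \cref{eq:g1-1} and \cref{lem:g2} respectively, and finish with H\"older in the discrete time variable to produce the factors $|I|^{1-2/q_0}$ and $|I|$. Your remark on the uniform $L^\infty$-boundedness of $\Pi_\tau$ (needed to feed $\|\Pi_\tau\psi(k\tau)\|_{L^\infty}\lesssim M$ into \cref{lem:g2}) is a detail the paper leaves implicit but which is indeed standard by scale-invariance of the kernel's $L^1$-norm.
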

	
	\begin{proof}
		Recalling \cref{eq:Z_def,eq:g_def}, we have
		\begin{equation*}
			Z^n = Z^n_1 + Z^n_2, 
		\end{equation*}
		where
		\begin{equation}\label{eq:Z12_def}
			Z_j^n = i \tau \sum_{k=0}^{n-1} \Stau((n-1)\tau - k \tau) \vphi_1(i \tau \Delta) \Pi_\tau ( g_j(\Pi_\tau \psi(k\tau)) - g_j(\psi^k_\tau)), \  j=1, 2. 
		\end{equation}
		For $Z_1$, by \cref{lem:dS2t} with $(\tilde q, \tilde r) = (q_0, r_0)$, \cref{lem:phi_1_Pi}, \cref{eq:g1-1}, and H\"older's inequality, we obtain
		\begin{align}\label{eq:Z1_est}
			\| Z_1 \|_{l^q_\tau(I; L^r)} 
			&\lesssim \| \vphi_1(i \tau \Delta) \Pi_\tau (g_1(\Pi_\tau \psi(k\tau)) - g_1(\psi^k_\tau)) \|_{l_\tau^{q_0'}([0, (n-1)\tau]; L^{ r_0'})} \notag \\
			&\lesssim \| V_1 \|_{L^{p_0}} \| \Pi_\tau \psi(k\tau) - \psi^k_\tau \|_{l_\tau^{q_0'}([0, (n-1)\tau]; L^{r_0})} \notag \\
			&\leq |I|^{1 - \frac{2}{q_0}} \| V_1 \|_{L^{p_0}} \| \Pi_\tau \psi(k\tau) - \psi^k_\tau \|_{l_\tau^{q_0}(I; L^{r_0})}. 
		\end{align}
		Similarly, by \cref{lem:dS2t} with $(\tilde q, \tilde r) = (\infty, 2)$, \cref{lem:phi_1_Pi}, \cref{lem:g2}, and H\"older's inequality, we have for $Z_2$, 
		\begin{align}\label{eq:Z2_est}
			\| Z_2 \|_{l^q_\tau(I; L^r)} 
			&\lesssim \| \vphi_1(i \tau \Delta) \Pi_\tau (g_2 (\Pi_\tau \psi(k\tau)) - g_2(\psi^k_\tau)) \|_{l_\tau^1([0, (n-1)\tau]; L^2)} \notag \\
			&\leq C(\| V_2 \|_{L^\infty}, M, M_0) \| \Pi_\tau \psi(k\tau) - \psi^k_\tau \|_{l_\tau^1([0, (n-1)\tau]; L^2)} \notag \\
			&\leq |I| C(\| V_2 \|_{L^\infty}, M, M_0)  \| \Pi_\tau \psi(k\tau) - \psi^k_\tau \|_{l^\infty_\tau(I; L^2)}. 
		\end{align}
		The combination of \cref{eq:Z1_est,eq:Z2_est} completes the proof. 
	\end{proof}
	
	%	\begin{theorem}
		%		Under the assumptions that $V \in L^{p_0}(\R^d)+L^\infty(\R^d)$ and $\psi_0 \in H^2(\R^d)$, when $0<\tau\leq \tau_0$ with $\tau_0>0$ sufficiently small depending on $M$ and $T$, we have
		%		\begin{equation*}
			%			\| \psi(t_n) - \psi^n_\tau \|_{L^2} \lesssim \tau^\gamma, \quad 0 \leq n \leq T/\tau. 
			%		\end{equation*}
		%	\end{theorem}
	
	\begin{proof}[Proof of \cref{thm:optimal,thm:L2poten}]
		Let $T_\ast<T$ be defined such that 
		\begin{equation}
			\left( |T_\ast|^{1 - \frac{2}{q_0}} + |T_\ast| \right)(C_1 + C_2(1+M)) \leq \frac{1}{4}, 
		\end{equation}
		where $C_1$ and $C_2$ are the constants in \cref{prop:Z}. We start by dividing the whole interval $[0, T]$ into $J$ subintervals of length no more than $T_\ast$, where $J \in \Z^+$ satisfies 
		\begin{equation}
			J (T_\ast/2) \leq T, \quad (J+1) (T_\ast/2) >T. 
		\end{equation}
		For $\tau \leq T_\ast/2$, choose $m_j \in \Z \ (j=0, \cdots, J+1)$ such that $m_0 = 0$, $m_{J+1} = T/\tau$, and 
		\begin{equation}
			m_j \tau \leq j (T_\ast/2), \quad (m_{j} + 1) \tau > j(T_\ast/2), \quad j = 1, \cdots, J.   
		\end{equation}
		Then we define time intervals $I_j \subset [0, T]$ as 
		\begin{equation}
			I_j = [m_{j} \tau, m_{j+1} \tau], \quad j = 0, \cdots, J. 
		\end{equation}
		It follows immediately that 
		\begin{equation}
			|I_j| = (m_{j+1} - m_{j})\tau \leq (T_\ast/2)+\tau \leq T_\ast, \quad j = 0, \cdots, J.  
		\end{equation}
		
		We shall present the proof for the cases $p \geq 2, d=1, 2$ and $p>2, d=3$, where we have $\gamma >3/4 \geq d/4$ in \cref{prop:E1}. When $p=2, d=3$, our result in \cref{thm:L2poten} only considers the linear case with $\beta = 0$, whose proof will follow the same line and is simpler as the induction process to control the nonlinearity is not needed. 
		
		Define the error function $e^n_\tau = \Pi_\tau \psi(n\tau) - \psi^n_\tau$ for $0 \leq n \leq T/\tau$. 
		%		Moreover, define $E_j = \| e_\tau \|_{X(I_j)}$ for $0 \leq j \leq J+1$. It suffices to obtain the estimates of $E_j$. 
		We first show the error estimate on the time interval $I_0 = [0, m_1\tau]$.  We use mathematical induction to control the $L^\infty$-norm of $\psi^n_\tau$. First, we have
		\begin{equation}
			\| e^0_\tau \|_{L^2} = 0 \leq 2C_0 \tau^\gamma, \quad \| \psi_\tau^0 \|_{L^\infty} \leq 1+M. 
		\end{equation}
		We assume that, for $0 \leq m \leq n-1 \leq m_1\tau-1$, 
		\begin{equation}\label{eq:assumption}
			\| e^m_\tau \|_{L^2} \leq 2C_0 \tau^\gamma, \quad \| \psi_\tau^m \|_{L^\infty} \leq 1+M. 
		\end{equation}
		We shall prove \cref{eq:assumption} for $m = n$. Under the assumptions \cref{eq:assumption}, from \cref{eq:error_eq}, using \cref{eq:E0_est,prop:E1,prop:E2,prop:Z}, for $(q, r) \in \{(q_0, r_0), (\infty, 2)\}$, noting $n \tau \leq |I_0| \leq T_\ast$, we have
		\begin{align}\label{eq:error_est}
			&\| e_\tau \|_{l_\tau^q([0, n\tau]; L^r)} \notag \\
			&\leq \| \mathcal{E}_0 \|_{l_\tau^q([0, n\tau]; L^r)} + \| \mathcal{E}_1 \|_{l_\tau^q([0, n\tau]; L^r)} + \| \mathcal{E}_2 \|_{l_\tau^q([0, n\tau]; L^r)} + \| Z \|_{l_\tau^q([0, n\tau]; L^r)} \notag \\
			&\leq \widetilde{C}\| e^0_\tau \|_{L^2} + C_0 (\tau + \tau^\gamma) \notag \\
			&\quad +((n\tau)^{1 - \frac{2}{q_0}} + n \tau)(C_1 + C_2(1+M)) \| e_\tau \|_{X_\tau([0, n\tau])} \notag \\
			&\leq  \widetilde{C}\| e^0_\tau \|_{L^2} + C_0\tau^\gamma + \frac{1}{4} \| e_\tau \|_{X_\tau([0, n\tau])}, 
		\end{align}
		where $\widetilde{C}$ is the constant in \cref{lem:dS1} also used in \cref{eq:E0_est}, and $C_0$ is the sum of constants in \cref{prop:E1,prop:E2}. Taking $(q, r) = (q_0, r_0)$ and $ (q, r) = (\infty, 2)$ in \cref{eq:error_est}, and summing together, we have
		\begin{equation}\label{err1}
			\| e_\tau \|_{X_\tau([0, n\tau])} \leq 2\widetilde{C}\| e^0_\tau \|_{L^2} + 2C_0\tau^\gamma + \frac{1}{2} \| e_\tau \|_{X_\tau([0, n\tau])}, 
		\end{equation}
		which implies
		\begin{equation}\label{err2}
			\| e_\tau \|_{X_\tau([0, n\tau])} \leq 4\widetilde{C}\| e^0_\tau \|_{L^2} + 4C_0\tau^\gamma, 
		\end{equation}
		which inserted into \cref{eq:error_est} yields that
		\begin{equation}\label{err3}
			\| e_\tau \|_{l_\tau^\infty([0, n\tau]; L^2)} \leq \| e_\tau \|_{X([0, n\tau])} \leq 2\widetilde{C} \| e^0_\tau \|_{L^2} + 2C_0\tau^\gamma. 
		\end{equation}
		%		First, choosing $(q, r) = (q_0, r_0)$ in the above, we have
		%		\begin{equation}
			%			\| e^n_\tau \|_{l^{q_0}(0, n \tau; L^{r_0})} \lesssim \tau. 
			%		\end{equation}
		%		Then, by choosing $(q, r) = (\infty, 2)$, we have
		%		\begin{equation}
			%			\sup_{0 \leq n \leq T/\tau} \| e^n_\tau \|_{L^2} = \| e^n_\tau \|_{l^\infty(0, n \tau; L^2)} \lesssim \tau. 
			%		\end{equation}
		By (2.13) in \cite{choi2021}, we have, for any $\phi \in L^2(\R^d)$, 
		\begin{equation}\label{eq:inv}
			\| \Pi_{\tau/4} \phi \|_{L^\infty} \lesssim \tau^{-\frac{d}{4}} \| \phi \|_{L^2}. 
		\end{equation}
		Using \cref{eq:inv} with the observation $ e^n_\tau = \Pi_{\tau/4} e^n_\tau$, the Sobolev embedding $H^{\frac{d}{2}+\vep}(\R^d) \hookrightarrow L^\infty(\R^d)$ for some $\vep > 0$ sufficiently small, and \cref{eq:Pi-2}, when $\tau \leq \tau_0$ with $\tau_0 > 0$ sufficiently small depending on $M, V, T$, we have
		\begin{align}\label{eq:Linftybound}
			\| \psi_\tau^n \|_{L^\infty} 
			&\leq \| e^n_\tau \|_{L^\infty} + \| (I - \Pi_\tau) \psi(n \tau) \|_{L^\infty} + \| \psi(n \tau) \|_{L^\infty} \notag \\
			&\leq C\tau^{-\frac{d}{4}}\| e^n_\tau \|_{L^2} + C \| (I - \Pi_\tau) \psi(n \tau) \|_{H^{\frac{d}{2}+\vep}} + M \notag \\
			&\leq  2CC_0\tau^{\gamma-\frac{d}{4}} + C\tau^{1-\frac{d}{4} - \frac{\vep}{2}}\| \psi(n\tau) \|_{H^2} + M \notag \\
			&\leq 1 + M. 
		\end{align}
		Then we have proved \cref{eq:assumption} for $m = n$ and thus for all $0 \leq m \leq m_1$. As a result, we obtain the error estimate on the interval $I_0$ as  
		\begin{equation}\label{eq:error_est_I0}
			\| e_\tau \|_{X(I_0)} \leq 2C_0 \tau^\gamma, \quad \| \psi_\tau \|_{l_\tau^\infty(I_0; L^\infty)} \leq 1+M.  
		\end{equation}
		In particular, \cref{eq:error_est_I0} yields
		\begin{equation}\label{err-m1}
			\| e^{m_1}_\tau \|_{L^2} \leq 2C_0\tau^\gamma. 
		\end{equation}
		Then applying the same induction process on $I_1$, we have, for all $0 \leq m \leq m_2-m_1$,  
		\begin{equation}\label{eq:induction_I1}
			\| e_\tau \|_{X_\tau([m_1\tau, m_1\tau+m\tau])} \leq 2\widetilde{C}\| e^{m_1}_\tau \|_{L^2} + 2C_0\tau^\gamma, \quad \| \psi^{m_1 + m} \|_{L^\infty} \leq 1+M, 
		\end{equation}
		which implies, by noting \cref{err-m1}, 
		\begin{equation}\label{eq:err_I1}
			\| e_\tau \|_{X_\tau(I_1)} \leq 2C_0 \tau^\gamma (1+(2\widetilde{C})), \quad \| \psi_\tau \|_{l_\tau^\infty(I_1; L^\infty)} \leq 1+M. 
		\end{equation}
		Continuing this process on $I_2, \cdots, I_J$, we  can obtain, for any $0 \leq j \leq J$,  
		\begin{equation}\label{eq:err_Ij}
			\| e_\tau \|_{X(I_j)} \leq  2C_0 \tau^\gamma \sum_{l=0}^j(2\widetilde{C})^{l}, \quad \| \psi_\tau \|_{l_\tau^\infty(I_j; L^\infty)} \leq 1+M, 
		\end{equation}
		which concludes the proof. 

	\end{proof}
	
	\section{Numerical results}\label{sec:5}
	In this section, we present some numerical results to confirm our error estimates. We also apply the EWI to study the dynamics of the NLSE under multiple Coulomb potentials. To quantify the error, we define the following error functions:
	\begin{equation*}
		e_{L^2}(t_n) = \| \psi(t_n) - \psi^n_\tau \|_{L^2}, \quad e_{H^1}(t_n) = \| \psi(t_n) - \psi^n_\tau \|_{H^1}, \quad 0 \leq n \leq T/\tau. 
	\end{equation*}
	
	In order to do the numerical simulation, we have to truncate the whole space $\R^d$ into a bounded domain $\Omega := \Pi_{j=1}^d (a_j, b_j)$ which is large enough such that the wave function $\psi$ remains away from the boundary of $\Omega$, and equip $\Omega$ with periodic boundary conditions. Then a full discretization of the EWI \cref{eq:EWI_scheme} can be obtained by using the Fourier spectral method for spatial discretization, which is the same as adding a filter function $\Pi_\tau$ in the EWI-FS scheme in \cite{bao2023_EWI}. Note that since the singular potential $V \in L^2(\Omega)$, the Fourier spectral method is well-defined (while the Fourier pseudospectral method is not).  
	
	We first test the convergence of the EWI in 1D, 2D and 3D under two types of singular potentials: 

		(i) Inverse power potential: {for some $\alpha>0$ to be chosen such that the potential has expected regularity,} 
		\begin{equation}\label{eq:V1}
			V(\vx) = -\frac{1}{|\vx|^\alpha}, \qquad \vx \in \Omega. 
		\end{equation}
		
		(ii) $L^2$-potential generated in the Fourier space
		\begin{equation}\label{eq:V2}
			V(\vx) =\text{Re} \sum_{\bm{l} = (l_1, \cdots, l_d)^T \in \Z^d} \widehat v_{\bm l} e^{ i \bm{\mu_l} \cdot (\vx - \bm{a})}, \qquad \vx \in \Omega,  
		\end{equation}
		where $\widehat{v}_{\bm l} \in \C$ are the Fourier coefficients, $\bm a = (a_1, \cdots , a_d)^T$, and $\bm {\mu_l} \in \R^d$ with $(\bm{\mu_l})_j = 2 \pi l_j / (b_j - a_j)$.   

	The initial datum is chosen as the standard Gaussian
	\begin{equation}
		\psi_0(\vx) = e^{-\frac{|\vx|^2}{2}}, \qquad \vx \in \Omega. 
	\end{equation}
	{We remark here that although we choose a smooth initial datum as above, the same convergence behaviour can be observed for $H^2$-initial data if the potential only has $L^2$-regularity. In fact, as already mentioned before, even with a smooth initial data, the regularity of the exact solution is restricted by the regularity of the potential and cannot exceed $H^2$ in general if $V \in L_\text{loc}^2$ \cite{wu2025_singular1d,wu2025_singularhd}.} 
	
	For the 1D example, we set $\Omega = (-16, 16)$ and final time $T = 1$. In \cref{NLSE}, we choose  $\beta = 1$, $\sigma = 1$, and two inverse power potentials \cref{eq:V1} with $\alpha = 0.49$ ($L^2$) and $\alpha = 0.74$ ($L^{\frac{4}{3}}$), respectively. The reference solution is computed by the EWI with $\tau = \tau_\text{e} = 10^{-6}$ and $h = h_\text{e} = 2^{-9}$, When computing the errors, we fix the mesh size $h= h_\text{e}$ and vary $\tau$. 
	
	The numerical results are presented in \cref{fig:conv_dt_1D}, which show that under $L^2$-potential, the EWI is first-order convergent in $L^2$-norm and half-order convergent in $H^1$-norm. For the slightly more singular $L^\frac{4}{3}$-potential, there is convergence order reduction and the EWI is 0.8-order convergent in $L^2$-norm and $0.4$-order convergent in $H^1$-norm. These results confirm our error estimates and suggest that our regularity assumptions for optimal convergence are optimally weak. 
	
	\begin{figure}[htbp]
		\centering
		{\includegraphics[width=0.475\textwidth]{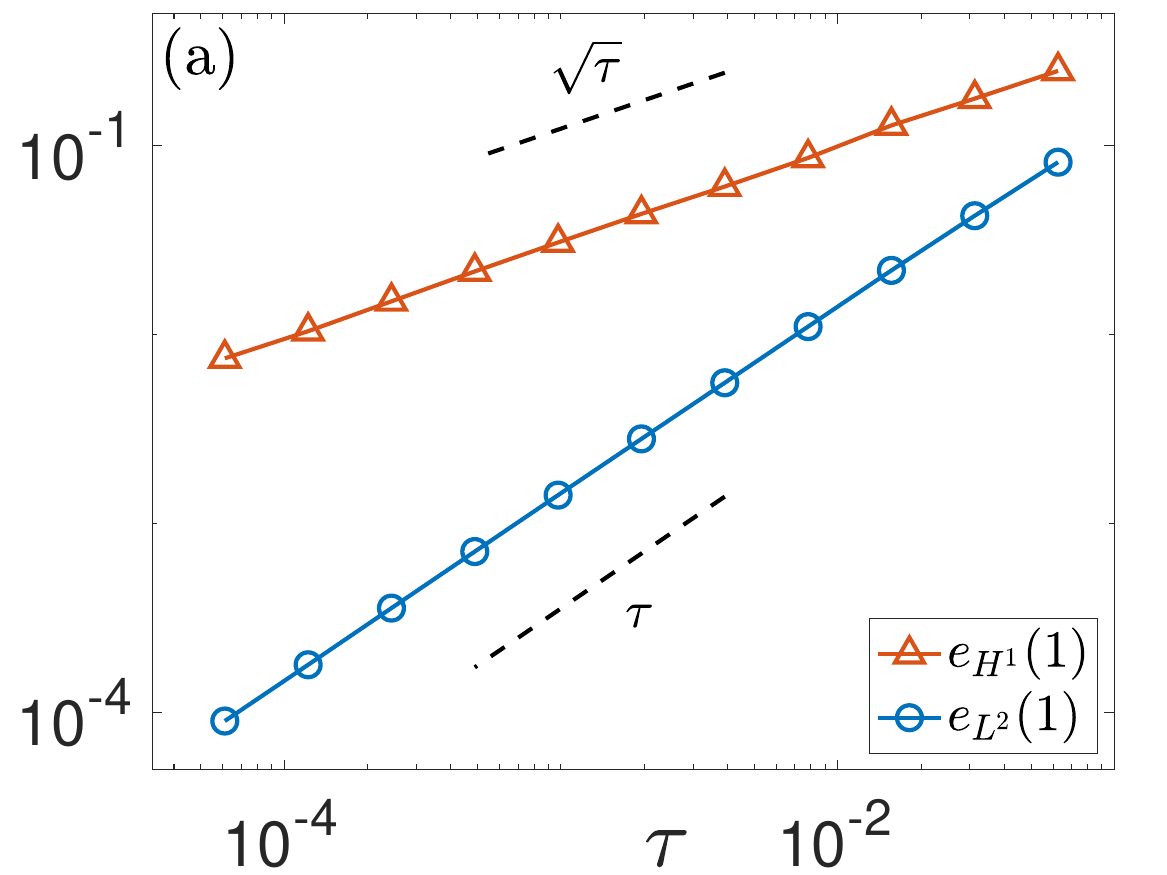}}\hspace{1em}
		{\includegraphics[width=0.475\textwidth]{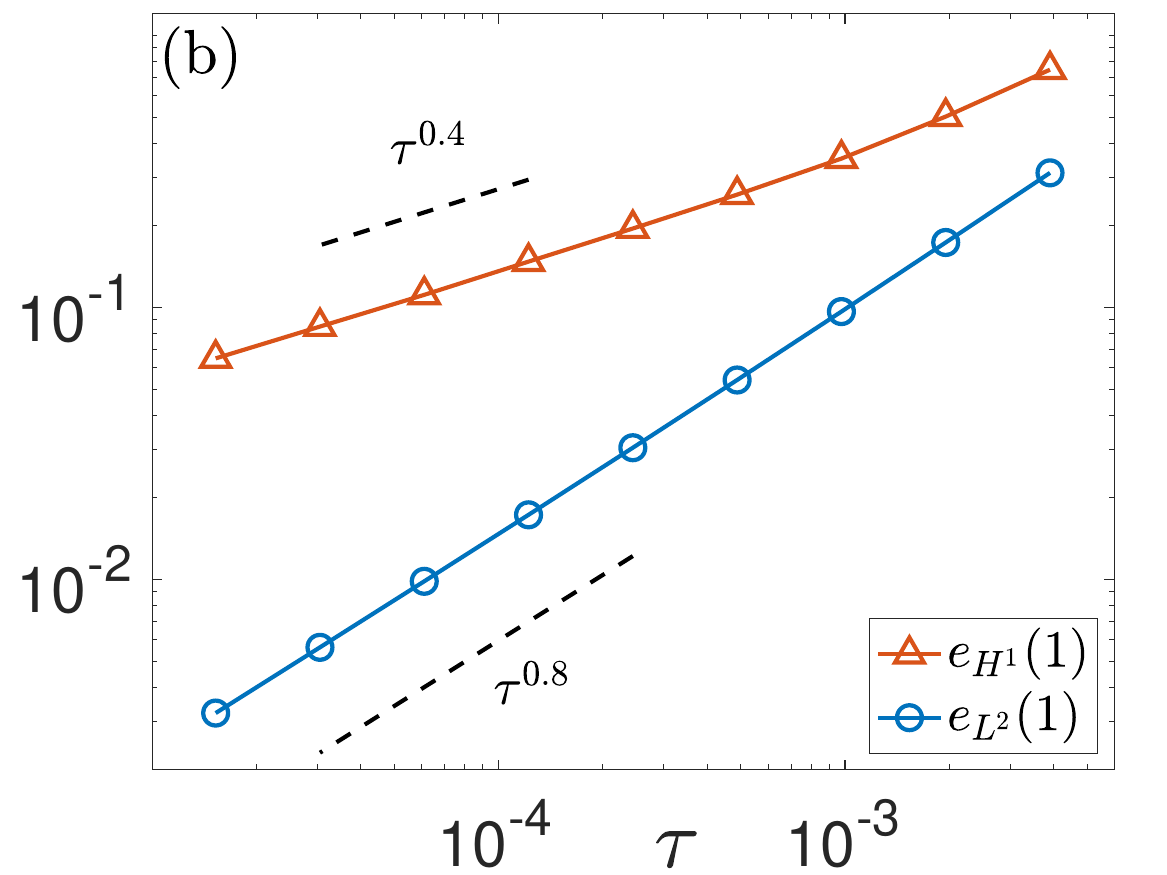}}
		\caption{Errors {at time $t=1$} in $L^2$- and $H^1$-norms of the EWI for the NLSE with singular potentials in 1D: (a) $\alpha = 0.49$ and (b) $\alpha = 0.74$}
		\label{fig:conv_dt_1D}
	\end{figure}
	
	{Although the spatial discretization is not considered in this paper, we present in \cref{fig:conv_h_1D} a numerical study on the spatial convergence of the Fourier spectral method for the above 1D example. We use the same reference solutions and compute the errors by fixing $\tau = \tau_\text{e}$ and varying $h$. We can observe that, under the $L^2$-potential, the Fourier spectral method is second-order convergent in $L^2$-norm and first-order convergent in $H^1$-norm. This is consistent with the $H^2$-regularity of the exact solution, indicating that the Fourier spectral method can still achieve optimal convergence orders with respect to the regularity of the exact solution even under a singular $L^2$-potential. The same behaviour of the Fourier spectral method are also observed in our extensive numerical tests of 2D and 3D examples with $L^2$-potentials (which are not shown here). For the more singular $L^\frac{4}{3}$-potential, the observed convergence rates reduce to about $1.5$-order in $L^2$-norm and $0.85$-order in $H^1$-norm. According to the sharp well-posedness result in \cite{wu2025_singular1d,wu2025_singularhd}, the exact solution in this case admits sharp $H^{1.75}$-regularity. This suggests that, once the potential becomes more singular than $L^2$, the Fourier spectral method may no longer achieve optimal convergence orders with respect to the regularity of the exact solution. A rigorous analysis of these phenomena is left for future work.}
	
	\begin{figure}[htbp]
		\centering
		{\includegraphics[width=0.475\textwidth]{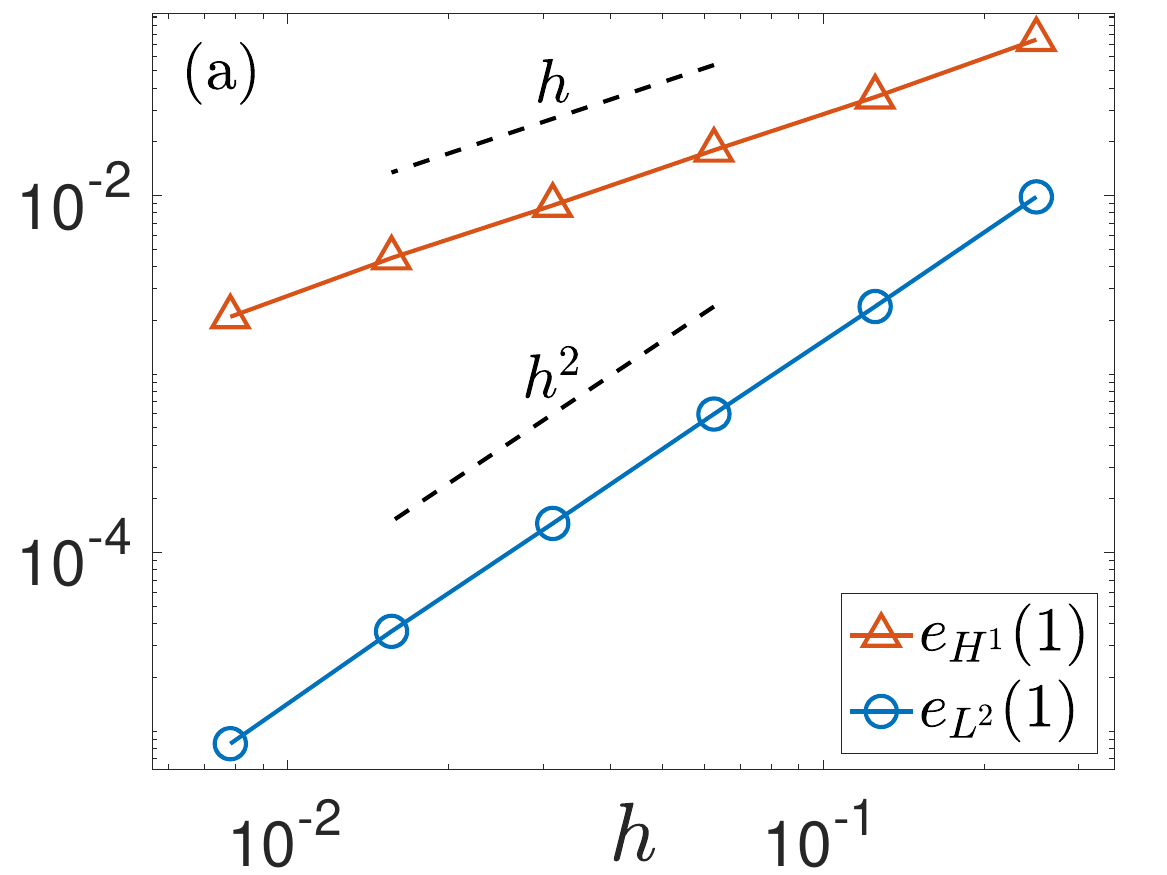}}\hspace{1em}
		{\includegraphics[width=0.475\textwidth]{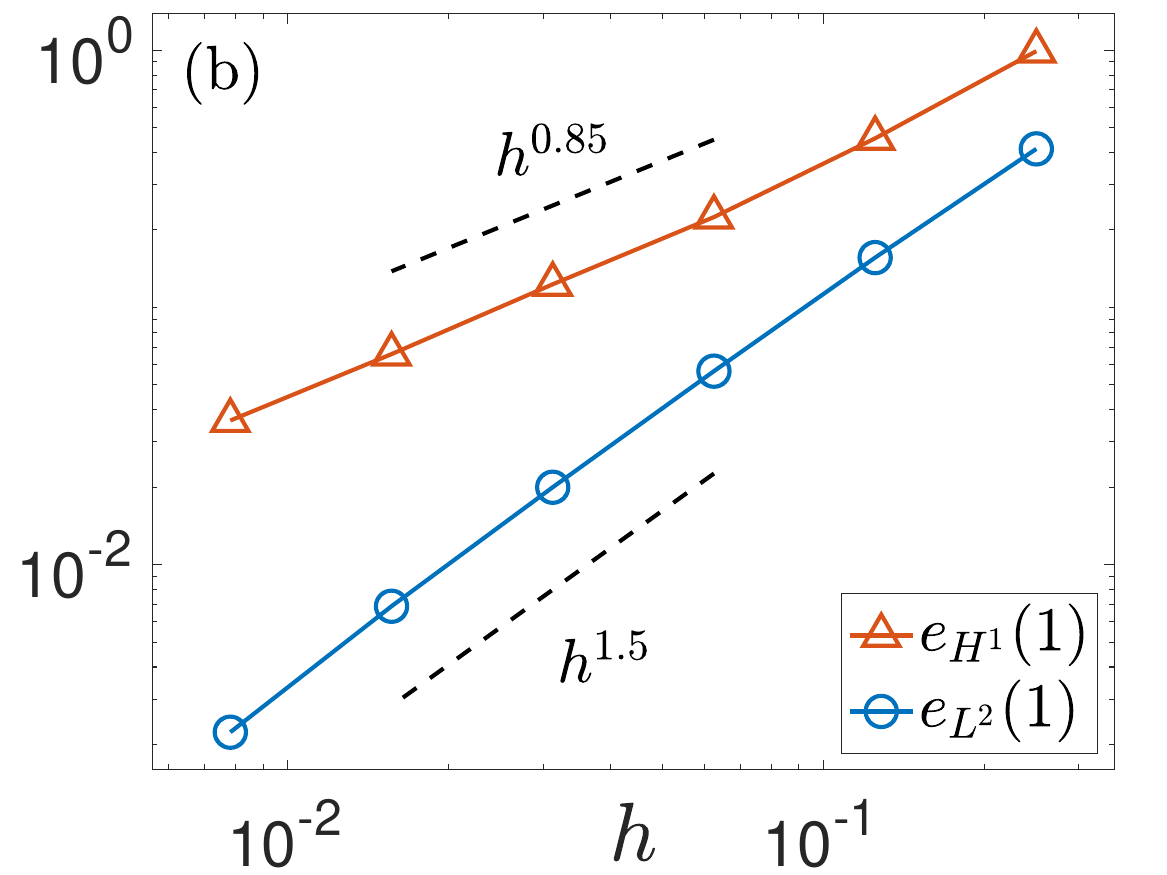}}
		\caption{{Spatial errors at time $t=1$ in $L^2$- and $H^1$-norms of the Fourier spectral method for the NLSE with singular potentials in 1D: (a) $\alpha = 0.49$ and (b) $\alpha = 0.74$}}
		\label{fig:conv_h_1D}
	\end{figure}
	
	Next, we consider a 2D example with $\Omega = (-8, 8) \times (-8, 8)$ and $T = 1/4$, and choose $\beta = -1$, $\sigma = 1$ in \cref{NLSE}. In this example, we study two kinds of singular potentials: (a) the Coulomb potential with $\alpha = 1$ in \cref{eq:V1}, which is in $L^{2-}$; (b) the random potential generated in the Fourier space through \cref{eq:V2} with 
	\begin{equation}
		\widehat{v}_{0, 0} = 1, \quad \widehat{v}_{\bm l} = \frac{\xi_{\bm l}}{|\bm{\mu_l}|}, \quad (0, 0)^T \neq \bm l \in \left\{-\frac{N_\text{ref}}{2}, \cdots, \frac{N_\text{ref}}{2}-1 \right\}^2, 
	\end{equation}
	where $\xi_{\bm l} =\text{rand}(-1, 1) + i \  \text{rand}(-1, 1)$ with $\text{rand}(-1, 1)$ returning a random number uniformly distributed in $[- 1, 1]$, and $N_\text{ref} = 2^9$. The reference solution is obtained by the EWI using $\tau_\text{e} = 10^{-5}$ and $h_\text{e} = 2^{-5}$ in both directions.  
	
	The numerical results are presented in \cref{fig:conv_dt_2D} (a) and (b) for the two potentials, respectively. In both cases, we see that the $L^2$-norm convergence is first-order and the $H^1$-norm convergence is half-order, corresponding well with our error estimates. We remark here that the convergence order reduction from $1$ to $1^{-}$ is hard to examine numerically. 
	\begin{figure}[htbp]
		\centering
		{\includegraphics[width=0.475\textwidth]{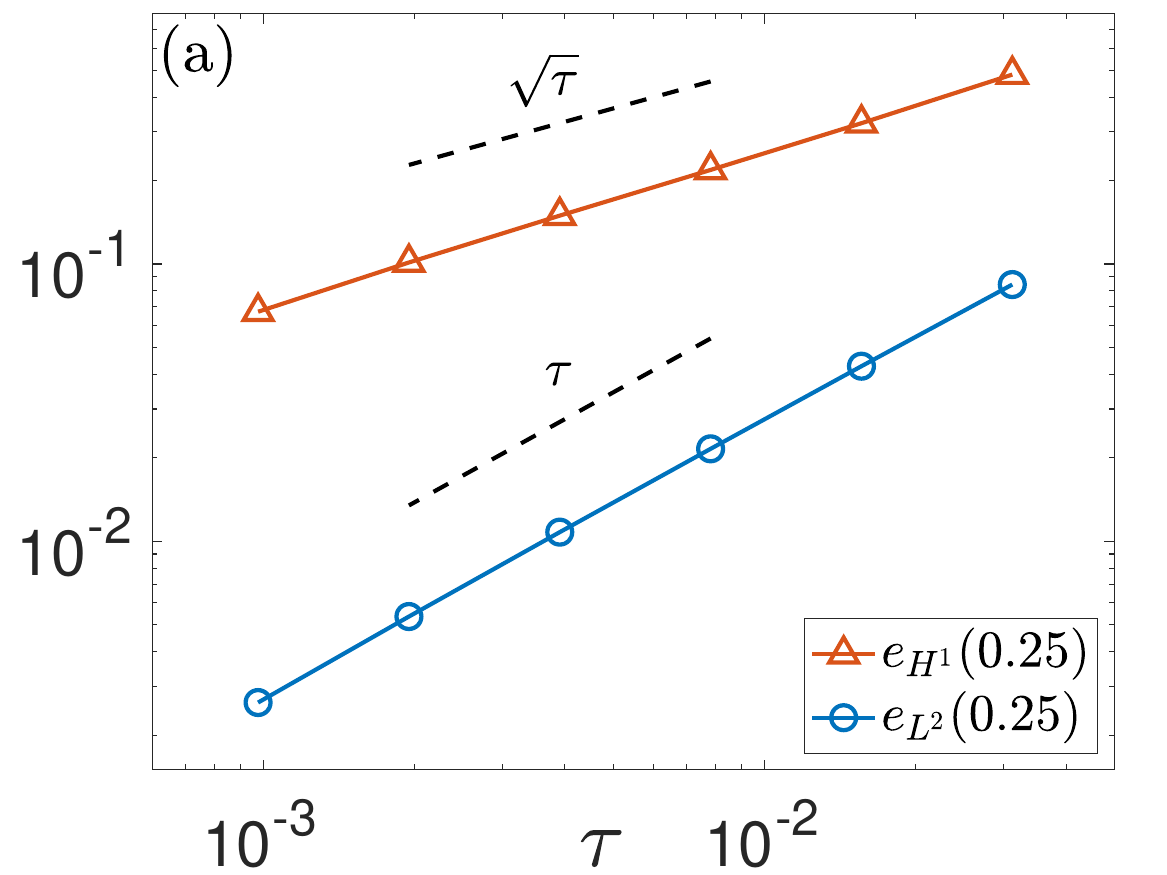}}\hspace{1em}
		{\includegraphics[width=0.475\textwidth]{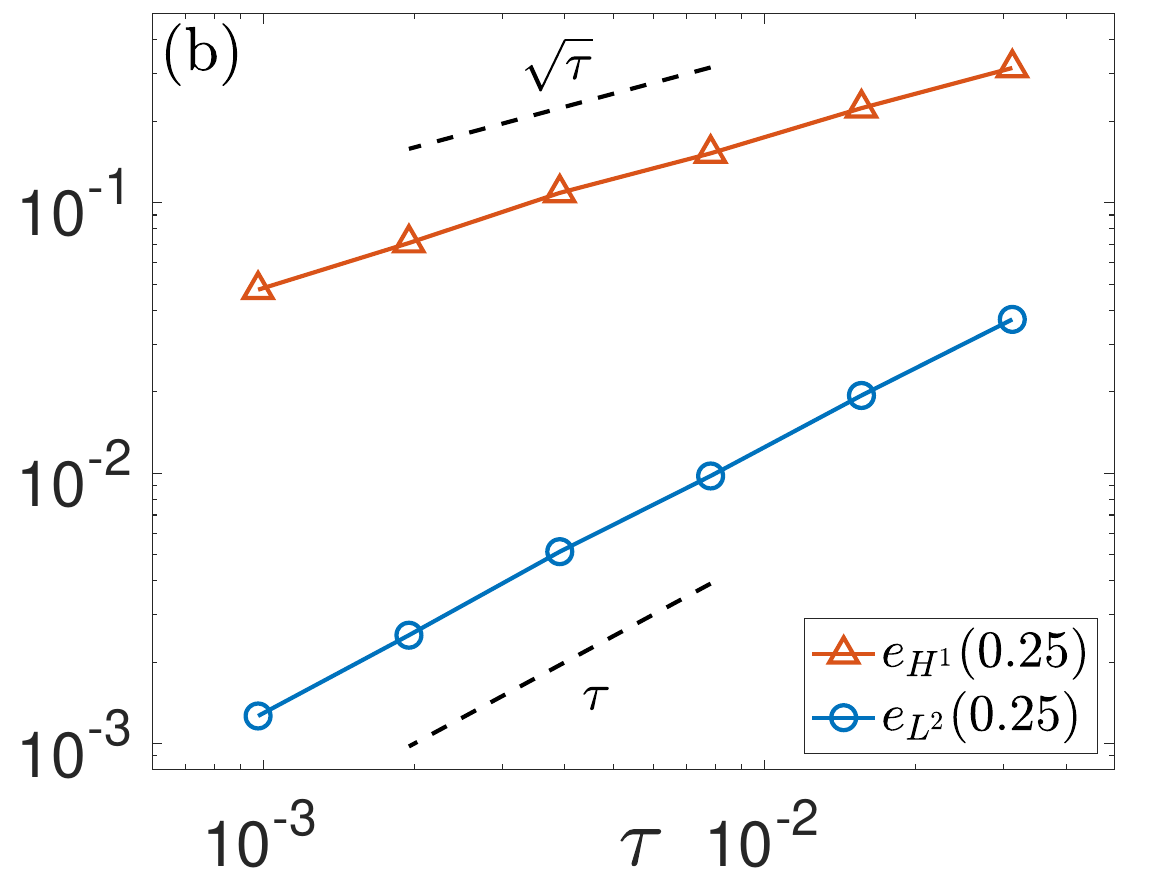}}
		\caption{Errors {at time $t=0.25$} in $L^2$- and $H^1$-norms of the EWI for the NLSE with $L^2$-potential in 2D: (a) Coulomb potential and (b) random potential}
		\label{fig:conv_dt_2D}
	\end{figure}
	
	Then we conduct a 3D numerical experiment with $\Omega = (-8, 8)^3$ and $T = 1/8$, and set $\beta = 1$, and $\sigma = 1$ in \cref{NLSE}. Two singular potentials of the form \cref{eq:V2} are considered: (a) an $L^{3^-}$-potential with $\widehat{v}_{\bm l} = (1+|\bm{\mu_l}|^2)^{-1}$ in \cref{eq:V2}, which has the same singularity as the Coulomb potential $-|\vx|^{-1}$ (recalling that the Fourier transform of $|\vx|^{-1}$ in $\R^3$ is $C |\vxi|^{-2}$); (b) an $L^2$-potential with $\widehat{v}_{\bm l} = (1+|\bm{\mu_l}|^2)^{-0.76}$ in \cref{eq:V2}. The reference solution is computed by the EWI using $\tau_\text{e} = 10^{-4}$ and $h_\text{e}= 2^{-5}$ in all the three dimensions. 
	
	We plot the numerical results in \cref{fig:conv_dt_3D}, where first-order convergence in $L^2$-norm is observed for both the $L^{3^-}$-potential in (a) and the $L^2$-potential in (b). These results verify our error estimates, while, the convergence order reduction for $L^2$-potential in 3D stated in \cref{thm:L2poten} is not observed, suggesting that first-order $L^2$-norm convergence might be extended to $L^p+L^\infty$-potential with $2 \leq p\leq \frac{12}{5}$. In addition, the $H^1$-norm convergence is 0.85-order for $L^{3^-}$-potential, which is due to the smooth Gaussian initial data and the better regularity of the $L^{3^-}$-potential (which is in fact $H^{\frac{1}{2}-}$). 
	\begin{figure}[htbp]
		\centering
		{\includegraphics[width=0.475\textwidth]{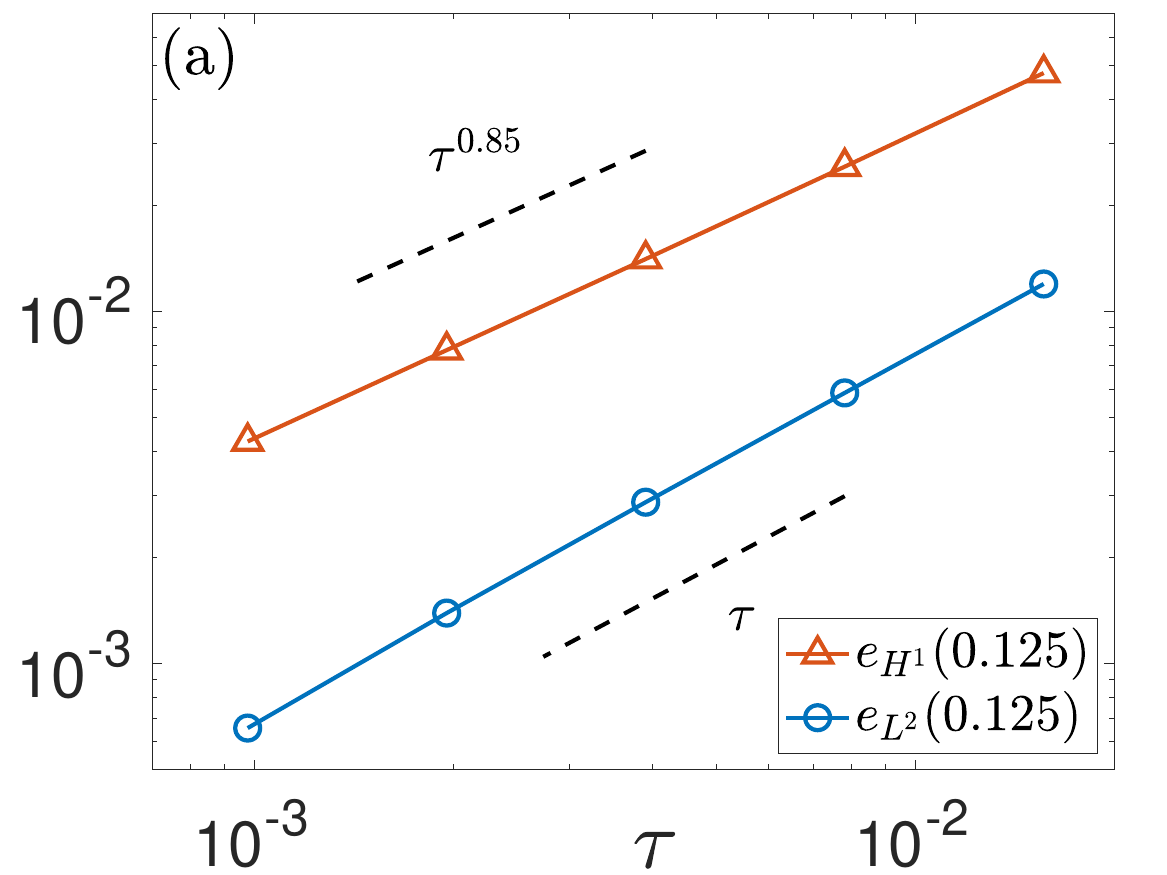}}\hspace{1em}
		{\includegraphics[width=0.475\textwidth]{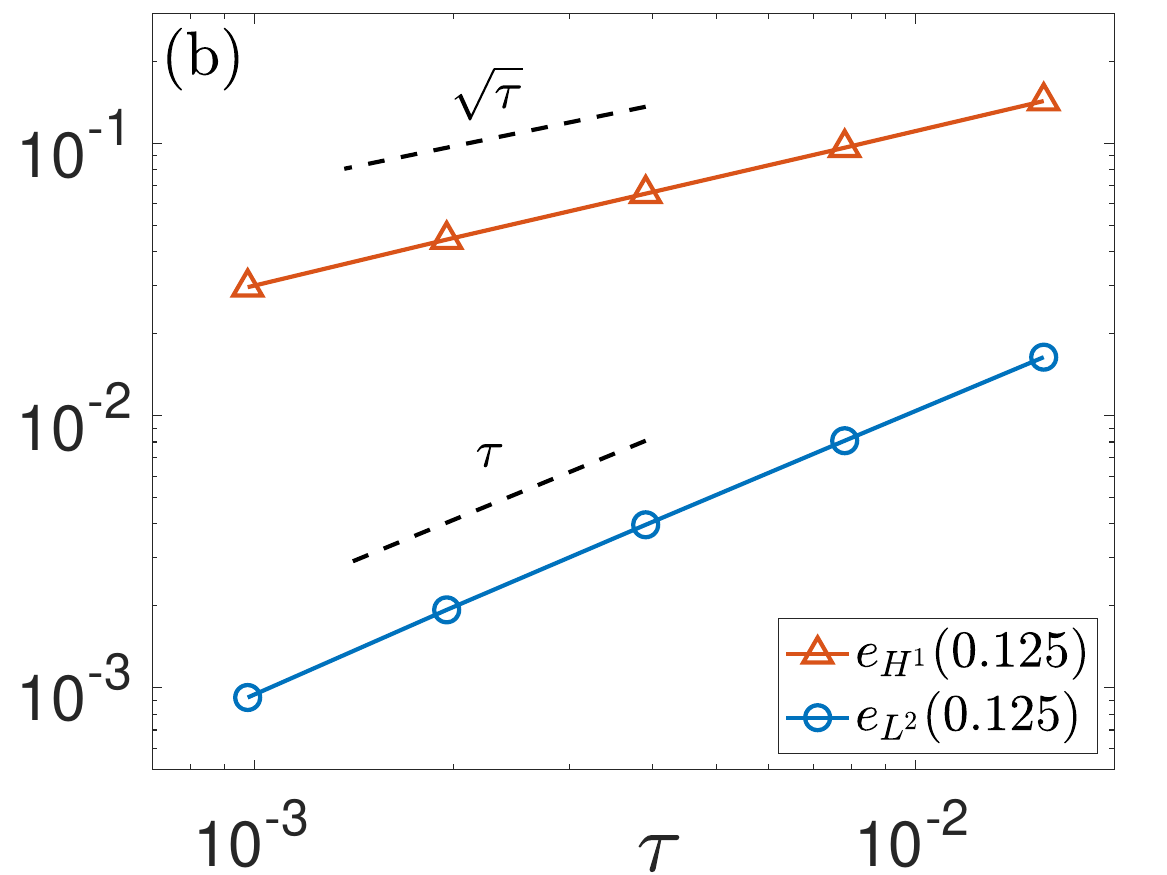}}
		\caption{Errors {at time $t=0.125$} in $L^2$- and $H^1$-norms of the EWI for the NLSE with singular potentials in 3D: (a) $L^{3^-}$-potential and (b) $L^2$-potential}
		\label{fig:conv_dt_3D}
	\end{figure}
	
	Finally, we apply the EWI \cref{eq:EWI_scheme} to simulate the dynamics under four attractive Coulomb potentials in 2D, which is given by \cref{eq:V} with $\alpha = 1$, $J=4$, $Z_j = -1$, and
	\begin{equation*}
		x_1 = (-1, 0)^T, \quad x_2 = (1, 0)^T, \quad x_3 = (0, 1)^T, \quad x_4 = (0, -1)^T. 
	\end{equation*}
	The initial datum is chosen as
	\begin{equation*}
		\psi_0(\vx) = \phi(\vx - \vx_0) e^{i x_1}, \quad \vx_0 = (-4, 2)^T, \quad \vx =(x_1, x_2)^T \in \Omega, 
	\end{equation*}
	where $\phi$ is the unique positive radial (action) ground state of the stationary NLSE 
	\begin{equation}
		-\Delta \phi(\vx) + \omega \phi(\vx) -|\phi(\vx)|^2 \phi(\vx) = 0, \quad \vx \in \R^2, \quad \omega = 3. 
	\end{equation}
	The initial set-up is illustrated in \cref{fig:ini_setting}. 
	\begin{figure}[htbp]
		\centering
		{\includegraphics[width=0.5\textwidth]{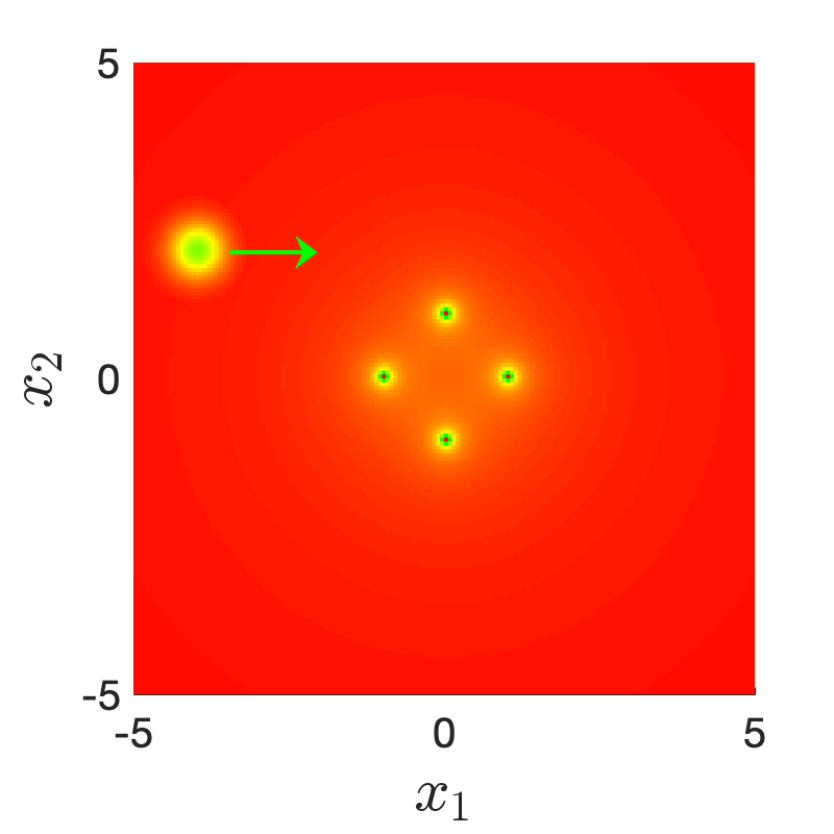}}
		\caption{Illustration of the initial set up}
		\label{fig:ini_setting}
	\end{figure}
	In computation, we choose $\Omega = (-8, 8) \times (-8, 8)$ and $T = 4$ with $\tau = 10^{-4}$ and $h = 2^{-5}$ for both directions. The density $|\psi(\vx, t)|^2$ at different time $t$ are exhibited in \cref{fig:dynamics}. 
	\begin{figure}[htbp]
		\centering
		{\includegraphics[width=\textwidth]{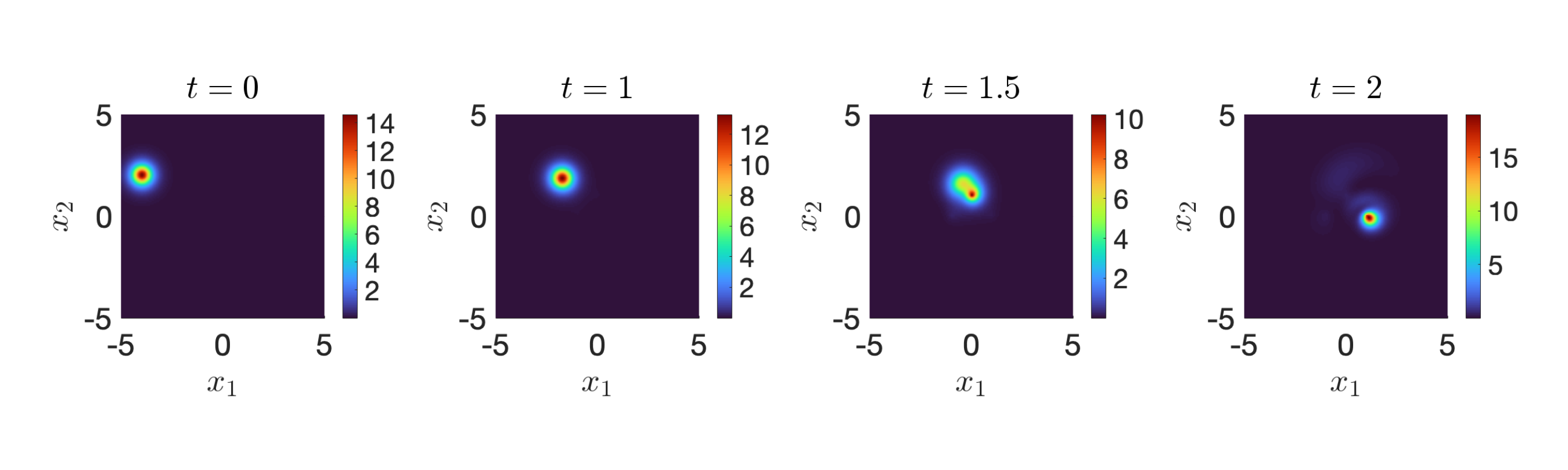}}\\
		{\includegraphics[width=\textwidth]{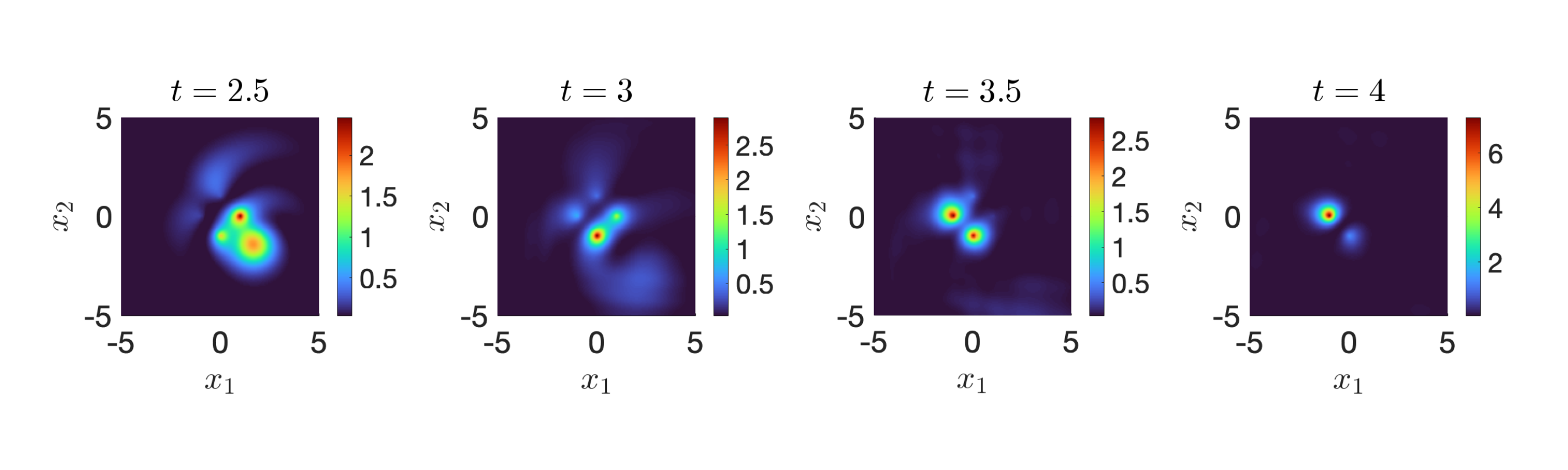}}
		\caption{Plots of $|\psi(\cdot, t)|^2$ at different time $t$ for the dynamics under four Coulomb potentials in 2D}
		\label{fig:dynamics}
	\end{figure}
	The numerical results demonstrate that the solitary wave initially moves to the right and becomes attracted to the upper Coulomb potential, subsequently visiting all four Coulomb centers in a clockwise sequence. 
	
	\section{Conclusion}\label{sec:6}
	We analyzed a first-order EWI for the NLSE with the singular potential of $L^p$-type under $H^2$-initial data. We obtained convergence orders for $L^p$-potential with $2 \leq p < \infty$ in 1D, 2D, and 3D. For $L^2$-potential, the EWI is almost first-order convergent in $L^2$-norm in 1D and 2D, while the convergence order is reduced to $\frac{3}{4}$-order in 3D. Under a stronger integrability assumption of $L^p$-potential with $p > \frac{12}{5}$ in 3D, the first-order $L^2$-norm convergence was proved. We also apply our results to the important cases of the inverse power potential. In particular, our results show that the EWI is optimally first-order convergent in $L^2$-norm under 3D Coulomb potential. Extensive numerical results in 1D, 2D, and 3D verified our error estimates and showed that they are optimal.  
	
	%\section*{Acknowledgments}

\end{document}